\theoremstyle{definition}
\newtheorem{thm}{Theorem}[section]
\newtheorem{dfn}[thm]{Definition}
\newtheorem{lem}[thm]{Lemma}
\newtheorem{prp}[thm]{Proposition}
\newtheorem{cor}[thm]{Corollary}
\newtheorem{thm*}{Theorem}
\newtheorem*{rmks}{Remarks}
\newtheorem*{rmk}{Remark}
\newtheorem*{clm}{Claim}
\newtheorem*{ntt}{Notation}
\newcommand{\inn}{\in\mathbb{N}}
\newcommand{\sch}{\mathbb{S}}
\newcommand{\e}{\varepsilon}
\newcommand{\de}{\delta}
\newcommand{\qpm}{_{q,p}^m}
\newcommand{\qpmk}{_{q,p}^{m_k}}
\newcommand{\blq}{B_{\ell_q}}
\newcommand{\blp}{B_{\ell_p}}
\newcommand{\SB}[2]{_{{#1},{#2}}}
\DeclareMathOperator{\supp}{supp}
\def\Tiny{\fontsize{4pt}{4pt}\selectfont}
\begin{document}

\title{Examples of $k$-iterated spreading models}

\author[S.A. Argyros, P. Motakis]{Spiros A. Argyros, Pavlos
Motakis}
\address{National Technical University of Athens, Faculty of Applied Sciences,
Department of Mathematics, Zografou Campus, 157 80, Athens,
Greece} \email{sargyros@math.ntua.gr, pmotakis@central.ntua.gr}

\maketitle

\footnotetext[1]{2010 \textit{Mathematics Subject Classification}:
Primary 46B03, 46B06, 46B25, 46B45}

\footnotetext[2]{\keywords{Spreading models, $k$-iterated
spreading models, uniformly convex Banach spaces, symmetric
bases}}

\begin{abstract}
It is shown that for every $k\inn$ and every spreading sequence
$\{e_n\}_{n\inn}$ that generates a uniformly convex Banach space
$E$, there exists a uniformly convex Banach space $X_{k+1}$
admitting $\{e_n\}_{n\inn}$ as a $k+1$-iterated spreading model,
but not as a $k$-iterated one.
\end{abstract}

\section*{Introduction}

The aim of the present note is to continue some research
initialized by B. Beauzamy and B. Maurey in \cite{m}. Before we
state our result, we need to recall the definition of $k$-iterated
spreading models. As is well known, spreading models are a central
concept in Banach space theory, invented by A. Brunel and L.
Sucheston in \cite{o}. For $k\geqslant 2$, the $k$-iterated
spreading models of a Banach space $X$, are inductively defined as
the spreading models of the spaces generated by the $k-1$-iterated
spreading models of $X$, where by 1-iterated, we understand the
usual spreading models.

H.P. Rosenthal asked whether the $k$-iterated, $k\geqslant 2$,
spreading models of any Banach spaces coincide with the
1-iterated. Beauzamy and Maurey answered that question, by showing
that the 2-iterated are, in general, different from the
1-iterated. More precisely they showed that there exists a Banach
space $X$, generating a spreading model, isomorphically containing
$\ell_1$ and $\ell_1$ is not a spreading model of the space $X$. A
more striking result in the same direction is given in \cite{n},
where it is shown that there exists a Banach space $X$, such that
every non trivial spreading model of $X$ isomorphically contains
$\ell_1$ and $\ell_1$ is not a spreading model of the space $X$.

In the present paper we separate the $k$-iterated and the
$k+1$-iterated spreading models, for every $k\inn$. More
precisely, the following is proved.

\begin{thm*} Let $\{e_n\}_{n\inn}$ be a spreading\footnote[3]{A sequence $\{e_n\}_{n\inn}$ in a seminormed space
$(E,\|\cdot\|_*)$ is called spreading if for every $n\inn$,
$k_1<\ldots<k_n\inn$ and $a_1,\ldots,a_n\in\mathbb{R},$ we have
that $\|\sum_{j=1}^na_j e_j\|_*=\|\sum_{j=1}^n a_j e_{k_j}\|_*$}
sequence generating a uniformly convex Banach space $E$. Then
there exists a sequence $\{X_k\}_{k\inn}$ of uniformly convex
Banach spaces, each one with a symmetric basis, such that for
every $k\inn$, the space $X_k$ admits a $k$-iterated spreading
model $\{\tilde{e}_n\}_{n\inn}$ equivalent to $\{e_n\}_{n\inn}$
and for every $i<k$, $E$ is not isomorphic to a subspace of the
space generated by any $i$-iterated spreading model of
$X_k$.\label{maintheorem}
\end{thm*}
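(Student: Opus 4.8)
The plan is to prove the theorem by induction on $k$, the inductive step being a single Tsirelson-type construction. Since $E$ is uniformly convex it is reflexive, so the spreading sequence $\{e_n\}_{n\inn}$ is weakly null, hence unconditional; after an equivalent renorming I assume it is a $1$-unconditional, $1$-spreading (so $1$-subsymmetric) basis of $E$, and I record that, being the basis of a uniformly convex space, it satisfies a lower $\ell_p$- and an upper $\ell_q$-estimate for some $1<p\leqslant q<\infty$ — these will yield uniform convexity of the spaces built from it. The engine is a construction $Y\mapsto\mathcal{R}(Y)$ assigning to a space $Y$ with a $1$-subsymmetric basis $\{y_n\}_{n\inn}$ a space $\mathcal{R}(Y)$ with a \emph{symmetric} basis such that: $(a)$ $\mathcal{R}(Y)$ is uniformly convex whenever $Y$ is; $(b)$ $\{y_n\}_{n\inn}$ is (equivalent to) a spreading model of $\mathcal{R}(Y)$, and every iterated spreading model of $\mathcal{R}(Y)$ generates a space isomorphic to $Y$ or to $\mathcal{R}(Y)$; $(c)$ $E$ is not isomorphic to a subspace of $\mathcal{R}^m(E)$ for any $m\geqslant 1$. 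Granting this I put $X_k:=\mathcal{R}^k(E)$. Then each $X_k$ is uniformly convex (by $(a)$ and induction) with a symmetric basis; by $(b)$ the chain of spreading-model passages $X_k\rightsquigarrow X_{k-1}\rightsquigarrow\dots\rightsquigarrow X_1\rightsquigarrow E$ exhibits $\{e_n\}_{n\inn}$, up to equivalence, as a $k$-iterated spreading model of $X_k$; and again by $(b)$, for $i<k$ every $i$-iterated spreading model of $X_k$ generates a space isomorphic to $\mathcal{R}^m(E)$ for some $m$ with $k-i\leqslant m\leqslant k$, hence with $m\geqslant 1$, and into such a space $E$ does not embed by $(c)$. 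So everything reduces to the construction of $\mathcal{R}$.

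For $\mathcal{R}(Y)$ I would use a Tsirelson-type norm over $Y$: the smallest norm on $c_{00}$ satisfying an implicit equation of the form $\|x\|=\max\{\|x\|_\infty,\ \theta\sup\|\textstyle\sum_{j=1}^{d}\|E_jx\|\,y_j\|_Y\}$, with $\theta<1$ and the inner supremum over finite families $(E_1,\dots,E_d)$ from a prescribed admissible class. Taken with $E_1<\dots<E_d$ successive and $\mathcal{S}_1$-admissible this is the usual Tsirelson space relative to $Y$; but its unit vector basis is only $1$-unconditional and "right-spreading", \emph{not} symmetric. I expect the main obstacle to be precisely the reconciliation of a genuinely \emph{symmetric} basis with enough "thinness" to force the upper spreading-model estimate in $(b)$ and the non-embedding in $(c)$: the standard devices that make a Tsirelson norm thin — letting the number $d$ of allowed pieces grow with $\min E_1$, i.e.\ with position — are exactly what ruin symmetry. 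The way out is to let $d$ be controlled by a \emph{permutation-invariant} quantity (for instance, tied to the number of pieces of a given relative size rather than to where they sit), so that the Tsirelson-type self-similarity is felt only along block sequences whose blocks have lengths tending to infinity, not along the symmetric basis itself. This produces exactly the two "types" of spreading model demanded by $(b)$: block sequences with bounded block-lengths reproduce the symmetric basis of $\mathcal{R}(Y)$ (as every symmetric basis is its own spreading model), whereas block sequences with block-lengths tending to infinity degrade, through the self-similar structure, to a model equivalent to $\{y_n\}_{n\inn}$.

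The remaining work then splits into an easy part and a hard part. Uniform convexity of $\mathcal{R}(Y)$ from that of $Y$ is a routine propagation of the lower $\ell_p$-estimate through the implicit norm (equivalently, a James-type criterion); the lower estimates in $(b)$ and $(c)$ are also immediate — one admissible family, used once far out along the basis or along a scale-increasing block sequence, already gives $\|\sum_{j=1}^{n}z_{m_j}\|\geqslant\theta\|\sum_{j=1}^{n}y_j\|_Y$. The genuinely hard steps are $(i)$ the matching \emph{upper} spreading-model estimate — that along a normalized block sequence starting sufficiently late no admissible family can contribute more than $\theta$ times, so the norm of a combination of $n$ spread-out blocks is controlled, up to a constant, by $\|\sum_{1}^{n}y_j\|_Y$ (and, refining this, that every iterated spreading model indeed collapses onto one of the two allowed types) — and $(ii)$ the thinness $E\not\hookrightarrow\mathcal{R}^m(E)$. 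Both are proved by a tree / basis-covering analysis of the norming functionals of $\mathcal{R}(Y)$, in the spirit of the proof that Tsirelson's space contains no $\ell_p$, run here with the operation $(a_j)\mapsto\|\sum a_j y_j\|_Y$ in place of the $\ell_1$-sum; the passage to $\mathcal{R}^m(E)$ for $m\geqslant 2$ is handled by running the same analysis over the space $\mathcal{R}^{m-1}(E)$, whose relevant structure is available by the inductive hypothesis together with $(b)$.
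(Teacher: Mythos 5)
Your global strategy is sound and in fact mirrors the paper's: iterate a single construction $Y\mapsto\mathcal{R}(Y)$ with a symmetric basis, prove that every spreading model of $\mathcal{R}(Y)$ generates a space living in $Y$ or in $\mathcal{R}(Y)$ (the paper's property iv.\ and Lemma \ref{18}), and kill the embedding of $E$ by a separation property. But the entire content of the theorem is the existence of $\mathcal{R}$, and what you propose for it is a speculative sketch with at least one step that would fail as stated. A Tsirelson-type implicit norm $\|x\|=\max\{\|x\|_\infty,\theta\sup\|\sum_j\|E_jx\|y_j\|_Y\}$ is a dangerous vehicle for \emph{uniform convexity}: with the classical $\ell_1$-type inner operation such norms produce uniform copies of $\ell_1^n$ on admissible blocks and the space is not superreflexive, and your claim that convexity is a ``routine propagation of the lower $\ell_p$-estimate'' glosses over exactly this. (You also have the estimates backwards: a lower $\ell_p$ \emph{and} an upper $\ell_q$ estimate with $p\leqslant q$ forces $p=q$; what uniform convexity gives, and what the paper uses via Theorem \ref{08} and Lemma \ref{09}, is $p$-convexity, i.e.\ an \emph{upper} $\ell_p$ estimate, together with a \emph{lower} $\ell_q$ estimate.) The paper avoids the implicit norm altogether: it applies a single, explicit Schreier operation with an $\ell_r$-sum, $r\geqslant q$, giving the Schreier--Baernstein space $SB_{X,r}$, whose $p$-convexity and lower $\ell_r$ estimate are verified by direct computation (Proposition \ref{10}), so uniform convexity is genuinely routine there. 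The separation $E\not\hookrightarrow X_m$ then comes for free from $\ell_{r_m}$-saturation of $X_m$ with $r_m>q_0$ versus the lower $\ell_{q_0}$ estimate on $E$ (total incomparability), rather than from a hard tree analysis.

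The second unsubstantiated point is the symmetrization. You correctly identify that position-dependent admissibility ruins symmetry, but your fix --- a ``permutation-invariant'' admissibility tied to the sizes of the pieces --- is precisely the hard part and is not constructed; there is no argument that such a norm retains the two-type spreading-model dichotomy, and symmetric norms admitting a Schreier-type lower estimate everywhere tend to collapse. The paper takes a different and much safer route: build the non-symmetric space $SB_{X_k,r_{k+1}}$ first, then invoke Davis' interpolation theorem (Theorem \ref{00}) to embed it into a uniformly convex space with a $1$-symmetric basis that is saturated with its subspaces; the dichotomy is then recovered from the symmetric basis itself via the $\|x_n\|_\infty\to0$ versus $\|x_n\|_\infty>\e$ alternative (Propositions \ref{07} and \ref{16}). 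Finally, two smaller gaps: your opening claim that $\{e_n\}_{n\inn}$ is weakly null is unjustified --- in a reflexive space a spreading sequence is weakly convergent but possibly to a nonzero limit (the singular case), which needs the separate reduction of Lemma \ref{lemmax1} --- and your property $(b)$ must be proved for spreading models of \emph{arbitrary} sequences, not just block sequences, which requires an argument like Lemma \ref{lemmax2}.
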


Denoting by $\mathcal{SM}_k^\text{it}(X)$ the class of all
$k$-iterated spreading models of a Banach space $X$, it is an easy
observation that these classes define an increasing family, with
respect to $k$. The above mentioned family $\{X_k\}_{k\inn}$
satisfies the additional property that for every $k\inn$, the
family $\big\{\mathcal{SM}_i^\text{it}(X_k)\big\}_{i=1}^k$ is
strictly increasing.

It is worth pointing out, that the $k$-iterated spreading models
of a Banach space $X$, for $k\geqslant 2$, are not easily
visualized from the structure of the space $X$ and this is an
obstacle for studying the structure of the space generated by
them. The key property of the aforementioned sequence
$\{X_k\}_{k\inn}$, is that the space generated by a spreading
model of any $X_k, k\geqslant 2$, is either isomorphic to a
subspace of $X_k$, or to a subspace of $X_{k-1}$.

The definition of the sequence $\{X_k\}_{k\inn}$, uses some
features from the Beauzamy-Maurey construction and also the
classical result, that every space with an unconditional basis,
embeds into a space with a symmetric basis \cite{a,l,s}. In
particular, W.J. Davis' approach \cite{a}, based on the Davis,
Figiel, Johnson, Pelczynski interpolation method \cite{b}, is the
one which is the more convenient for our needs. Of independent
interest is also Proposition \ref{16}, characterizing the
structure of the spreading models of spaces with a 1-symmetric
basis.

\section{Preliminaries}

Our notation concerning Banach space theory, shall follow the
standard one from \cite{d}.

\begin{dfn}Let $(X,\|\cdot\|)$ be a Banach space and $(E,\|\cdot\|_*)$ a seminormed
space. Let $\{x_n\}_{n\inn}$ be a bounded sequence in $X$ and
$\{e_n\}_{n\inn}$ a sequence in $E$. We say that $\{x_n\}_{n\inn}$
generates $\{e_n\}_{n\inn}$ as a spreading model, if there exists
a sequence of positive reals $\{\de_n\}_{n\inn}$ with
$\de_n\searrow 0$, such that for every $n\inn, n\leqslant k_1
<\ldots<k_n$ and every choice $\{a_i\}_{i=1}^n\subset[-1,1]$ the
following holds:

\begin{equation*}
\Big|\big\|\sum_{i=1}^na_ix_{k_i}\big\| -
\big\|\sum_{i=1}^na_ie_i\big\|_*\Big| < \de_n
\end{equation*}

\end{dfn}

We also say that the Banach space $X$ admits $\{e_n\}_{n\inn}$ as
a spreading model, or $\{e_n\}_{n\inn}$ is a spreading model of
$X$, if there exists a sequence in $X$ which generates
$\{e_n\}_{n\inn}$ as a spreading model.

Brunel and Sucheston proved that every bounded sequence in a
Banach space, has a subsequence which generates a spreading model.
The main property of spreading models is that they are spreading
sequences, i.e. for every $n\inn, k_1 <\ldots<k_n$ and every
choice $\{a_i\}_{i=1}^n\subset\mathbb{R}$ we have
$\|\sum_{i=1}^na_ie_i\|_* = \|\sum_{i=1}^na_ie_{k_i}\|_*$.

We classify spreading sequences into four categories, with respect
to their norm properties. These are the trivial, the
unconditional, the singular and the non unconditional Schauder
basic spreading sequences.

A spreading sequence $\{e_n\}_{n\inn}$ is called trivial, if the
seminorm on the space generated by the sequence is not actually a
norm. In this case, if $E$ is the vector space generated by
$\{e_n\}_{n\inn}$ and $\mathcal{N} = \{x\in E: \|x\|_* = 0\}$,
then {\small $E$}$/_\mathcal{N} $ has dimension 1. It is also
worth mentioning that a sequence in a Banach space $X$ generates a
trivial spreading model, if and only if it has a norm convergent
subsequence. For more details see \cite{g,f}. From now on, we will
only refer to non trivial spreading models.

Otherwise we distinguish the three previously mentioned cases. A
spreading sequence is called singular if it is not trivial and not
Schauder basic. The definition of the other two cases is the
obvious one.

\begin{ntt}
(1) Let $E_0, E$ be Banach spaces. We write $E_0\rightarrow E$, if
$E$ is generated by a spreading sequence, which is a spreading
model of some seminormalized sequence in $E_0$. Also, for $k\inn$,
the notation $E_0\stackrel{k}{\rightarrow} E$, means that
$E_0\rightarrow E_1\rightarrow\ldots\rightarrow E_{k-1}\rightarrow
E$, for some sequence of Banach spaces $E_1,\ldots,E_{k-1}$.

(2) Let $E_0, E$ be a Banach spaces, such that $E_0$ has a
Schauder basis. We write $E_0\underset{\text{bl}}{\rightarrow}E$,
if $E$ is generated by a spreading sequence, which is a spreading
model of some seminormalized block sequence of the basis of $E_0$.
Also for $k\inn$, the notation $E_0\substack{k\\
\longrightarrow\\ \text{bl}}E$, means that
$E\underset{\text{bl}}{\rightarrow}E_1\underset{\text{bl}}{\rightarrow}\ldots\underset{\text{bl}}{\rightarrow}E_{k-1}\underset{\text{bl}}{\rightarrow}E$,
for some sequence of Banach spaces $E_1,\ldots,E_{k-1}$ with
Schauder bases.
\end{ntt}

\begin{dfn} Let $E_0$ be a Banach space, $\{e_n\}_{n\inn}$ be a
spreading sequence in a seminormed space, $k\inn$.
$\{e_n\}_{n\inn}$ is said to be a $k$-iterated spreading model of
$E_0$, if there exists a Banach space $E$, such that
$E_0\stackrel{k-1}{\rightarrow}E$, and $\{e_n\}_{n\inn}$ is the
spreading model of some seminormalized sequence in $E$.
\end{dfn}

\begin{dfn} Let $E_0$ be a Banach space with a Schauder basis, $\{e_n\}_{n\inn}$ be a
spreading sequence in a seminormed space, $k\inn$.
$\{e_n\}_{n\inn}$ is said to be a block $k$-iterated spreading
model of $E$, if there exists a Banach space $E$ with a Schauder
basis, such that $E_0\substack{k-1\\
\longrightarrow\\ \text{bl}}E$, and $\{e_n\}_{n\inn}$ is the
spreading model of some seminormalized block sequence of the basis
of $E$.
\end{dfn}

The following definition is in accordance with the corresponding
one for Schauder bases.

\begin{dfn} Let $\{e_n\}_{n\inn}, \{\tilde{e}_n\}_{n\inn}$ be non
trivial spreading sequences which generate the Banach spaces $E$
and $\tilde{E}$ respectively. $\{e_n\}_{n\inn},
\{\tilde{e}_n\}_{n\inn}$ are said to be equivalent, if the linear
map $e_n\rightarrow \tilde{e}_n$ extends to an isomorphism between
$E$ and $\tilde{E}$.
\end{dfn}

It is easy to see that if $X$ and $Y$ are isomorphic Banach
spaces, then any non trivial spreading model admitted by $X$ is
equivalent to one admitted by $Y$ and vice versa. Therefore we
introduce the following definition.

\begin{dfn} Let $\{x_n\}_{n\inn}$ be a sequence in a Banach space
$X$ which generates a non trivial spreading model
$\{e_n\}_{n\inn}$. Let also $\{\tilde{e}_n\}_{n\inn}$ be a non
trivial spreading model equivalent to $\{e_n\}_{n\inn}$. Then we
say that $\{x_n\}_{n\inn}$ isomorphically generates
$\{\tilde{e}_n\}_{n\inn}$ as a spreading model.
\end{dfn}

\section{Interpolating Spaces with a Symmetric basis}

We begin by presenting some estimations, concerning a sequence of
$\|\cdot\|\qpm$ norms, next defined.

\begin{dfn}Let $1\leqslant q < p$. For $m\inn$, define
$\|\cdot\|\qpm$ on $\ell_p$ as follows:

\begin{equation*}
\|x\|\qpm = \inf\Big\{\lambda>0: \frac{x}{\lambda}\in m\blq +
\frac{1}{m}\blp\Big\}
\end{equation*} \label{01}

\end{dfn}

\begin{rmks} The following statements are true for all $m\inn$

\begin{enumerate}

\item[1.] $\frac{1}{m}\|\cdot\|_p \leqslant \|\cdot\|\qpm
\leqslant (m + \frac{1}{m})\|\cdot\|_p$, thus
$\|\cdot\|\qpm\sim\|\cdot\|_p$.

\item[2.] If $x\in\ell_q$, then $\|x\|\qpm \leqslant
\frac{\|x\|_q}{m+\frac{1}{m}}$.

\item[3.] $\|\cdot\|\qpm$ is a symmetric norm, i.e. if
$\{a_i\}_{i\inn}\in\ell_p$, then

\begin{equation*}
\|\sum_{i=1}^\infty a_ie_i\|\qpm = \|\sum_{i=1}^\infty
\e_ia_ie_{\pi(i)}\|\qpm
\end{equation*}

for any $\{\e_i\}_{i\inn}$ choice of signs and any permutation
$\pi$ of the naturals.

\end{enumerate}

\end{rmks}

\begin{lem} Let $1\leqslant q<p, \{x_n\}_{n\inn}\subset \ell_p,
\e>0, M\in[\mathbb{N}]^\infty$, such that $\|x_n\|_p>\e$ for all
$n\inn$ and $\lim_{n\to\infty}\|x_n\|_\infty = 0$.

Then $\sup\big\{\|x_n\|\qpm: n\in\mathbb{N},m\in M\big\} =
\infty$. \label{02}
\end{lem}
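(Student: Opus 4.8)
The plan is to prove the slightly more precise statement that for every $m\in M$ one has $\liminf_{n\to\infty}\|x_n\|\qpm\geqslant \e m$. Since $M\subseteq\mathbb N$ is infinite, hence unbounded, this immediately gives
$$\sup\big\{\|x_n\|\qpm:n\inn,\ m\in M\big\}=\sup_{m\in M}\ \sup_{n\inn}\|x_n\|\qpm\geqslant \sup_{m\in M}\e m=\infty .$$
So the whole point is a lower estimate for $\|x_n\|\qpm$, for a fixed (large) $m$. The naive idea of interpolating $x_n$ itself between $\ell_q$ and $\ell_\infty$ inside the decomposition turns out to be too lossy (it only yields the conclusion when $p>2q$); instead I will test $x_n$ against a carefully chosen functional.

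For each $n$ let $x_n^*=\|x_n\|_p^{1-p}\sum_i\operatorname{sgn}(x_n(i))|x_n(i)|^{p-1}e_i$ be the norming functional of $x_n$ in $\ell_p$, and let $p'$ (resp. $q'$) be the conjugate exponent of $p$ (resp. of $q$, with the convention $q'=\infty$ if $q=1$). Then $\|x_n^*\|_{p'}=1$ and $\langle x_n^*,x_n\rangle=\|x_n\|_p$. The role of the hypothesis $\|x_n\|_\infty\to 0$ is that the coordinates of $x_n^*$ become uniformly small: $\|x_n^*\|_\infty=(\|x_n\|_\infty/\|x_n\|_p)^{p-1}\leqslant(\|x_n\|_\infty/\e)^{p-1}\to 0$. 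Combining this with $\|x_n^*\|_{p'}=1$ through the elementary inequality $\|x_n^*\|_{q'}^{q'}\leqslant\|x_n^*\|_\infty^{q'-p'}\|x_n^*\|_{p'}^{p'}$ (valid since $q<p$ forces $q'>p'$; when $q=1$ just use $\|x_n^*\|_{q'}=\|x_n^*\|_\infty$) yields $\|x_n^*\|_{q'}\to 0$ as $n\to\infty$.

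Now fix $m\in M$ and $\lambda>\|x_n\|\qpm$. By Definition \ref{01} and convexity of $m\blq+\frac1m\blp$ we may write $x_n=y+z$ with $\|y\|_q\leqslant\lambda m$ and $\|z\|_p\leqslant\lambda/m$. Two applications of H\"older's inequality give
$$\e<\|x_n\|_p=\langle x_n^*,x_n\rangle=\langle x_n^*,y\rangle+\langle x_n^*,z\rangle\leqslant\|x_n^*\|_{q'}\,\lambda m+\|x_n^*\|_{p'}\,\tfrac{\lambda}{m}=\lambda\Big(m\|x_n^*\|_{q'}+\tfrac1m\Big).$$
Letting $\lambda\downarrow\|x_n\|\qpm$ we obtain $\e\leqslant\|x_n\|\qpm\big(m\|x_n^*\|_{q'}+\tfrac1m\big)$. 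Since $m\|x_n^*\|_{q'}+\tfrac1m\to\tfrac1m$ as $n\to\infty$, there is $n_0$ such that $\|x_n\|\qpm\geqslant \e m/2$ for all $n\geqslant n_0$; in particular $\liminf_n\|x_n\|\qpm\geqslant\e m/2$, which suffices. The only genuine obstacle is recognising that one should pass to the dual and exploit that the $\ell_p$-norming functional of a vector with small sup-norm is small in every $\ell_r$, $r<\infty$; after that everything is routine.
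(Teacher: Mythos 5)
Your proof is correct, but it takes a genuinely different route from the paper's. The paper argues by contradiction on the primal side: assuming the supremum is bounded by $C$, it fixes a single $m_0\in M$ so large that the $\frac{1}{m_0}B_{\ell_p}$ component of each decomposition of $x_n$ contributes less than $\e/2$ in $\ell_p$-norm, then uses the symmetry of the norms to replace the decomposition by one that is coordinatewise dominated by $x_n$, so that the $\ell_q$-component inherits $\|\cdot\|_\infty\to 0$; interpolating between $\ell_q$ and $\ell_\infty$ forces the $\ell_p$-norm of that component to vanish, contradicting the fact that it must carry at least $\e/2$ of $\|x_n\|_p$. You instead argue directly and on the dual side: you test an arbitrary decomposition against the $\ell_p$-norming functional $x_n^*$, note that $\|x_n\|_\infty\to 0$ together with $\|x_n\|_p>\e$ forces $\|x_n^*\|_{q'}\to 0$, and two applications of H\"older yield the quantitative bound $\liminf_{n\to\infty}\|x_n\|\qpm\geqslant\e m$ for each fixed $m$. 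Your version buys a clean quantitative statement and sidesteps the coordinatewise-domination step, which the paper only sketches (``with simple calculations one may find\ldots''); the paper's version stays on the primal side, consistent with the symmetric-rearrangement arguments it uses elsewhere in that section. Both proofs ultimately rest on the same interpolation fact --- a vector with controlled $\ell_q$ (respectively $\ell_{p'}$) norm and vanishing sup norm has vanishing $\ell_p$ (respectively $\ell_{q'}$) norm --- applied on opposite sides of the duality.
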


\begin{proof} Towards a contradiction, suppose that $\sup\big\{\|x_n\|\qpm: n\in\mathbb{N},m\in
M\big\} < C$. Then for all $n\inn, m\in M$, there exist
$0<\lambda_n^m<C, y_n^m\in\blq, z_n^m\in\blp$ such that

\begin{equation*}
x_n = \lambda_n^m\big(my_n^m + \frac{1}{m}z_n^m\big)
\end{equation*}

Choose $m_0\in M$ such that
$\lambda_n^{m_0}\frac{1}{m_0}\|z_n^{m_0}\|_p < \frac{\e}{2}$, for
all $n\inn$. Then
\begin{equation}
\lambda_n^{m_0}m_0\|y_n^{m_0}\|_p > \frac{\e}{2},\quad\text{for
all}\; n\inn \label{eq1}
\end{equation}

By the symmetricity of the norms, we may assume that if $x_n =
\sum_{i=1}^\infty a_ie_i$, then $a_i\geqslant 0$, for all $i\inn$.
Moreover, if $y_n^m = \sum_{i=1}^\infty b_ie_i, z_n^m =
\sum_{i=1}^\infty c_ie_i$, we may assume that $0\leqslant
\lambda_n^mmb_i, \lambda_n^m\frac{1}{m}c_i \leqslant a_i$, for all
$i\inn$.

Otherwise, with simple calculations one may find $y_n^{m\prime} =
\sum_{i=1}^\infty b_i^\prime e_i, z_n^{m\prime} =
\sum_{i=1}^\infty c_i^\prime e_i$, satisfying this condition ,
such that $x_n = \lambda_n^m\big(my_n^{m\prime} +
\frac{1}{m}z_n^{m\prime}\big)$ and $y_n^{m\prime}\in\blq,
z_n^{m\prime}\in\blp$.

This means that $\lambda_n^{m_0}m_0\|y_n^{m_0}\|_\infty \leqslant
\|x_n\|_\infty \rightarrow 0$, as $n\rightarrow\infty$. Since
$\lambda_n^{m_0}m_0\|y_n^{m_0}\|_q < Cm_0$, for all $n\inn$, by
using the H\"older inequality, it is easy to see that
$\lambda_n^{m_0}m_0\|y_n^{m_0}\|_p\rightarrow 0$, as
$n\rightarrow\infty$. This contradicts \eqref{eq1}, which
completes the proof.

\end{proof}

\begin{lem} Let $1\leqslant q <p, \{x_n\}_{n\inn}\subset \ell_p,
M\in[\mathbb{N}]^\infty$ such that
$\lim_{n\to\infty}\|x_n\|_\infty = 0$ and $\sup\big\{\|x_n\|\qpm:
n\in\mathbb{N},m\in M\big\} < \infty$.

Then for every $\e>0, m_0\in M$, there exists $n_0\inn$ such that
for all $n\geqslant n_0: \max\big\{\|x_n\|\qpm: m\in M, m\leqslant
m_0\big\} < \e$.\label{03}
\end{lem}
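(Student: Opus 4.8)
The plan is to reduce the whole statement to the single fact that, under the hypotheses, $\lim_{n\to\infty}\|x_n\|_p=0$; once this is known, the conclusion follows at once from the first item of the Remarks after Definition~\ref{01}.

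First I would dispose of the (finite) maximum. Since $m_0\inn$, the set $\{m\in M:m\leqslant m_0\}$ is finite, so put $K=\max\{m+\tfrac1m:m\in M,\ m\leqslant m_0\}$. By the first item of the Remarks after Definition~\ref{01} we have $\|x_n\|\qpm\leqslant(m+\tfrac1m)\|x_n\|_p\leqslant K\|x_n\|_p$ for every $m\in M$ with $m\leqslant m_0$ and every $n\inn$, whence $\max\{\|x_n\|\qpm:m\in M,\ m\leqslant m_0\}\leqslant K\|x_n\|_p$. Thus it suffices to prove that $\|x_n\|_p\to 0$.

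To establish $\|x_n\|_p\to 0$ I would argue by contradiction using Lemma~\ref{02}. If $\|x_n\|_p\not\to 0$, there exist $\e_0>0$ and an infinite set $N\subseteq\mathbb{N}$ with $\|x_n\|_p>\e_0$ for all $n\in N$. Re-indexing $\{x_n\}_{n\in N}$ as a sequence indexed by $\mathbb{N}$, this subsequence still satisfies $\|x_n\|_\infty\to 0$, so Lemma~\ref{02} (applied to it, with the same $M$ and with $\e_0$) gives $\sup\{\|x_n\|\qpm:n\in N,\ m\in M\}=\infty$. But this supremum is dominated by $\sup\{\|x_n\|\qpm:n\in\mathbb{N},\ m\in M\}$, which is finite by hypothesis, a contradiction. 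Hence $\|x_n\|_p\to 0$.

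Combining the two parts: given $\e>0$ and $m_0\in M$, pick $n_0\inn$ with $\|x_n\|_p<\e/K$ for all $n\geqslant n_0$; then $\max\{\|x_n\|\qpm:m\in M,\ m\leqslant m_0\}<\e$ for all $n\geqslant n_0$, as required. I do not expect a genuine obstacle here; the only point that needs a moment's care is the passage to a subsequence when invoking Lemma~\ref{02}, since that lemma is phrased for a sequence indexed by all of $\mathbb{N}$ --- but this re-indexing is harmless and preserves both $\|x_n\|_\infty\to 0$ and the uniform bound on the norms $\|x_n\|\qpm$.
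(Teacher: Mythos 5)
Your proof is correct and follows essentially the same route as the paper: both arguments hinge on the equivalence $\|\cdot\|\qpm\sim\|\cdot\|_p$ over the finite set $\{m\in M:m\leqslant m_0\}$ together with a contradiction via Lemma~\ref{02} against the hypothesis $\sup\{\|x_n\|\qpm\}<\infty$. Your repackaging --- isolating the intermediate claim $\|x_n\|_p\to 0$ instead of negating the conclusion and pigeonholing to a single $m$ --- is only a cosmetic difference.
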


\begin{proof} Towards a contradiction, suppose that there exist $\e>0, m_0\in M$, such that for all $n\inn$, there exists $k_n\geqslant n$
with $\max\big\{\|x_{k_n}\|\qpm: m\in M, m\leqslant
m_0\big\}\geqslant\e$.

By passing to subsequence of $\{x_n\}_{n\inn}$, we can find
$m\leqslant m_0$, such that $\|x_n\|\qpm\geqslant\e$, for all
$n\inn$.

But $\|\cdot\|\qpm\sim\|\cdot\|_p$, hence
$\|x_n\|_p\geqslant\e^\prime$, for all $n\inn$. By virtue of Lemma
\ref{02}, this means that $\sup\big\{\|x_n\|\qpm:
n\in\mathbb{N},m\in M\big\} = \infty$. Since this cannot be the
case, the proof is complete.

\end{proof}

The following Theorem is due to W.J. Davis and is presented in
\cite{a}. See also \cite{d}.

\begin{thm} Let $X$ be a (reflexive, uniformly convex) Banach space with a
1-unconditional basis. Then there exists a (reflexive, uniformly
convex) Banach space $D$ with a 1-symmetric basis, such that $X$
\makebox[2.95pt][s]{$\hookrightarrow$}%
\makebox[9pt][s]{\raisebox{-0.18ex}{\Tiny$\perp$}} $D$.

Moreover $D$ is saturated with subspaces of $X$, i.e. if $Z$ is a
subspace of $D$, then there exists a further subspace of $Z$,
which is isomorphic to a subspace of $X$. \label{00}
\end{thm}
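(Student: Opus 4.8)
The plan is to feed $X$ into the Davis–Figiel–Johnson–Pełczyński interpolation scheme, using the $\|\cdot\|\qpm$ norms of Definition \ref{01} as the parameter family, and then to read off the three assertions (embedding, reflexivity/uniform convexity, and $X$-saturation) from the structure of the resulting interpolation space. Concretely, fix $1\leqslant q<p$ so that $X$ embeds isometrically into $\ell_p$-sums in the appropriate sense; since $X$ has a $1$-unconditional basis $\{x_n\}$, after renorming we may regard $X$ as a sequence space sitting between $\ell_q$ and $\ell_p$ in a way that lets Remarks 1--3 following Definition \ref{01} apply coordinatewise. First I would set $W_m = m B_X + \tfrac1m B_{\ell_p}$ (the analogue of the sets used to define $\|\cdot\|\qpm$, but with $B_X$ in place of $B_{\ell_q}$), let $\|\cdot\|_m$ be the Minkowski gauge of $W_m$, and define $D$ to be the space of those $x$ for which $\vertiii{x}^2 = \sum_{m=1}^\infty \|x\|_m^2 < \infty$. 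One checks $D$ is a Banach space, the formal identity $X \hookrightarrow D$ is bounded with dense range onto a norm-one-complemented copy (the complementation coming from the unconditional/symmetric averaging projections), and the basis $\{x_n\}$, being $1$-unconditional and — because each $\|\cdot\|_m$ is a symmetric norm by Remark 3 — invariant under coordinate permutations in $D$, is a $1$-symmetric basis of $D$.

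Next I would establish reflexivity and, under the stronger hypothesis, uniform convexity. Reflexivity is the classical DFJP output: the unit ball of $D$ is, up to the $\ell_2$-sum trick, squeezed between the weakly compact sets $W_m$, and an Eberlein–James argument (or the standard DFJP lemma that $J_2$-interpolation of a weakly compact set is reflexive) gives reflexivity of $D$. For uniform convexity I would use that $X$ is uniformly convex, hence super-reflexive with a power-type modulus, and that $\ell_p$ ($1<p<\infty$) is uniformly convex; the $\ell_2$-sum $\vertiii{\cdot}$ of the uniformly convex norms $\|\cdot\|_m$ inherits uniform convexity provided the moduli do not degenerate, and the key quantitative point is exactly the content of Remark 1 (the $\|\cdot\|_m$ are uniformly equivalent to $\|\cdot\|_p$ on bounded pieces) together with Lemmas \ref{02} and \ref{03}, which prevent the interpolation from producing almost-flat directions. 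This is the step I expect to be the main obstacle: getting a \emph{uniform} lower bound on the modulus of convexity of $\vertiii{\cdot}$ from the moduli of the $\|\cdot\|_m$ requires the estimates of Lemmas \ref{02}--\ref{03} to show that any normalized $x\in D$ has non-negligible $\|\cdot\|_m$-mass on a controlled range of $m$'s, so that convexity there is felt globally; carrying the $p$-type/$q$-cotype bookkeeping through the $\ell_2$-sum is where the real work lies.

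Finally, for the saturation statement, let $Z\subseteq D$ be an infinite-dimensional subspace; by a standard gliding-hump argument (using that $\{x_n\}$ is a symmetric, hence Schauder, basis of $D$) I may pass to a further subspace $Z'$ of $Z$ spanned by a normalized block sequence $\{z_n\}$ of $\{x_n\}$. Since $Z'$ is reflexive, $\|z_n\|_\infty\to 0$ after a further perturbation, so Lemma \ref{03} applies: the sup of $\|z_n\|\qpm$ over $n$ and over a fixed infinite $M$ is finite (being dominated by $\vertiii{\cdot}$), whence for each fixed $m$ the norms $\|z_n\|_m$ tend to $0$. This forces the $D$-norm of tail blocks to be controlled, up to small error, by the $\sup_m \|\cdot\|_m$-behaviour on a single large scale $m_0$, which is (a renorming of) the $X$-norm; a small-perturbation/Bessaga–Pełczyński argument then shows a subsequence of $\{z_n\}$ is equivalent to a block basis of $X$, so $Z$ has a subspace isomorphic to a subspace of $X$, completing the proof. $\qed$
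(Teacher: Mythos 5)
The construction you propose does not establish the theorem, and the gap sits exactly at the two conclusions the theorem is about. First, symmetry: you take $W_m=mB_X+\frac1m\blp$ and justify the $1$-symmetry of the gauges $\|\cdot\|_m$ by Remark~3. But Remark~3 concerns the sets $m\blq+\frac1m\blp$, whose invariance under permutations and sign changes comes from the fact that \emph{both} $B_{\ell_q}$ and $B_{\ell_p}$ are symmetric; $B_X$ is only $1$-unconditional, so your $\|\cdot\|_m$, and hence your $D$, are merely $1$-unconditional --- which is where you started. Second, the embedding: the formal identity $X\to D$ is bounded, but in general it is not bounded below. Already for $X=\ell_q$ (so that $W_m$ is exactly $m\blq+\frac1m\blp$), taking $x_n=n^{-1/q}\sum_{i\leqslant n}e_i$ and $\theta=\frac1p-\frac1q<0$, the decompositions $x_n=m\cdot\frac{x_n}{m}+0$ and $x_n=0+\frac1m(mx_n)$ give $\|x_n\|\qpm\leqslant\min\{mn^{\theta},1/m\}$, whence $\sum_m\big(\|x_n\|\qpm\big)^2\lesssim n^{\theta/2}\to0$ while $\|x_n\|_q=1$; so the DFJP hull of $B_X$ in $\ell_p$ need not contain a copy of $X$ through the formal identity, and the claimed complemented copy is not there. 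Davis's construction, which the paper follows, is genuinely two-level precisely to avoid both problems: the inner norms $|\!|\!|\cdot|\!|\!|\qpmk$ interpolate between the two \emph{symmetric} spaces $\ell_q$ and $\ell_p$, so each is a symmetric norm on $\ell_p$ that is uniformly convex with modulus independent of $m_k$ (being a quotient norm of $\ell_q\oplus_2\ell_p$); the space $X$ enters only through the \emph{outer} sum $\mathfrak{X}=\big(\sum_{k=1}^\infty\bigoplus(\ell_p,|\!|\!|\cdot|\!|\!|\qpmk)\big)_X$, and $D$ is the diagonal of $\mathfrak{X}$, whose norm $\big\|\sum_k|\!|\!|x|\!|\!|\qpmk u_k\big\|_X$ (with $\{u_k\}$ the basis of $X$) is symmetric in the coordinates of $x$ because each inner norm is, while the outer $X$-norm is what yields the complemented copy of $X$ and transfers reflexivity and uniform convexity.

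In the saturation step, your assertion that reflexivity of $Z'$ forces $\|z_n\|_\infty\to0$ after a perturbation is false: the unit vector basis of $\ell_2$ is a normalized, weakly null block basis with $\|e_n\|_\infty=1$. The paper's Lemma~\ref{x1} supplies the missing step: if $\|y_n\|_\infty>\e$ for all $n$, continuity of the coordinate map $j:D\to\ell_p$ gives $\|\frac1k\sum_{i\in I}y_i\|\geqslant\|j\|^{-1}\frac{\e}{k}|I|^{1/p}$, so averages of length $|I|\sim k^p$ remain seminormalized while their sup norm is at most $1/k$; this manufactures a block sequence with sup norm tending to $0$, and only then do Lemma~\ref{03} and a sliding hump along the decomposition $\big\{(\ell_p,\|\cdot\|\qpmk)\big\}_{k\inn}$ (Proposition~\ref{07}) produce a subsequence equivalent to a block sequence of $X$. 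Your final step is close to this last part, but it cannot get started without the averaging argument, and in any case it rests on the flawed construction above.
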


Given $X$ a Banach space with a 1-unconditional basis, $D$ is
defined to be the diagonal subspace of $\mathfrak{X} =
\Big(\sum_{k=1}^\infty\bigoplus\big(\ell_p,|\!|\!|\cdot|\!|\!|\qpmk\big)\Big)_X$,
where the norms $|\!|\!|\cdot|\!|\!|_{q,p}^m$ are defined on
$\ell_p$ and are of the form $|\!|\!|x|\!|\!|_{q,p}^m =
\inf\{(\|y\|^2_{\ell_q} + \|z\|^2_{\ell_p})^\frac{1}{2}: x = my +
\frac{1}{m}z, y\in\ell_q, z\in\ell_p\}$, for $1<q<p$. If we denote
by $\tilde{e}_n = \{e_n,e_n,\ldots\}$, where $\{e_n\}_{n\inn}$ is
the natural basis of $\ell_p$, then $\tilde{e}_n\in\mathfrak{X}$
and $\{\tilde{e}_n\}_{n\inn}$ is the 1-symmetric basis of $D$.

Observe that for every $m\inn, \|\cdot\|\qpm\leqslant
|\!|\!|\cdot|\!|\!|\qpm\leqslant\sqrt{2}\|\cdot\|\qpm$. It
immediately follows that $\mathfrak{X}$, is isomorphic to
$\mathfrak{X}^\prime$, where $\mathfrak{X}^\prime =
\Big(\sum_{k=1}^\infty\bigoplus\big(\ell_p,\|\cdot\|\qpmk\big)\Big)_X$.

As it is shown in \cite{d}, $X$ embeds into $D$ as a complemented
subspace and every subspace of $D$ contains a further subspace
isomorphic to a subspace of $X$. The latter is shown in \cite{e},
but a proof also follows from the above and the following.

\begin{lem} Let $Y$ be a block subspace of $D$. Then there exists
a further block subspace $Z$ of $Y$, such that
$\lim_{n\to\infty}\|z_n\|_\infty = 0$, where $\{z_n\}_{n\inn}$
denotes the normalized block basis of $Z$.\label{x1}
\end{lem}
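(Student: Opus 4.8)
The plan is to exploit the block structure of $D$ as the diagonal of $\mathfrak{X}'=\big(\sum_k\bigoplus(\ell_p,\|\cdot\|\qpmk)\big)_X$, and to produce a further block subspace whose normalized generators have $\ell_\infty$-norm tending to $0$. Write a normalized block sequence $\{y_n\}_{n\inn}$ of the symmetric basis $\{\tilde e_n\}_{n\inn}$ of $D$; each $y_n$ is a finitely supported vector of $D$, whose $k$-th coordinate in $\mathfrak{X}'$ is the same finitely supported vector of $\ell_p$, call it $u_n\in\ell_p$, measured in the norm $\|\cdot\|\qpmk$. Since the $y_n$ are normalized in $D$, the sequence $\{\|u_n\|\qpmk\}_k$ lies in $B_X$ (as an element indexed by $k$), and in particular $\sup_k\|u_n\|\qpmk<\infty$ for each fixed $n$, with a uniform bound. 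The first step is therefore to record that $\sup\{\|u_n\|\qpm:n\inn,m\in\mathbb{N}\}<\infty$.

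Next I would argue by contradiction: suppose no block subspace $Z$ of $Y$ has normalized block basis with $\|z_n\|_\infty\to 0$. Passing to a block subspace of $Y$ we may assume $Y$ itself has a normalized block basis $\{y_n\}_{n\inn}$, and by a standard sliding-hump/diagonal argument we may assume that for the corresponding $\{u_n\}_{n\inn}\subset\ell_p$ we have $\inf_n\|u_n\|_\infty\geqslant\e>0$ for some $\e$; after a further subsequence and perturbation we may assume the $u_n$ are disjointly supported in $\ell_p$ (their $\ell_\infty$-norms are bounded below, but this does not yet force disjoint supports—so I would instead extract a subsequence so that there is a fixed coordinate where the mass $\geqslant \e$, or more robustly use that $\|u_n\|_\infty\not\to 0$ fails for every block subspace, which by the usual argument lets us assume the $u_n$ are ``almost disjointly supported'' with $\|u_n\|_\infty\geqslant\e$). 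The key point is then: a sequence in $\ell_p$ with $\|u_n\|_\infty\geqslant\e$ that does not tend to $0$ in $\ell_\infty$ in no block subspace—combined with $\|u_n\|_p\geqslant\|u_n\|_\infty\geqslant\e$—is not the right hypothesis for Lemma \ref{02} directly, since that lemma needs $\|u_n\|_\infty\to 0$. So instead I invert the logic: I want to apply Lemma \ref{02} in the contrapositive. If, for every block subspace $Z$ of $Y$, the normalized block basis does NOT satisfy $\|z_n\|_\infty\to 0$, then in particular along $Y$ itself we cannot pass to any subsequence or block sequence with this property, which (by a Ramsey/subsequence splitting argument, as in the proof of Lemma \ref{03}) means that $\liminf_n\|u_n\|_\infty>0$ along every subsequence, hence $\|u_n\|_\infty\geqslant\e$ for all $n$ after passing to one subsequence.

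Now I would combine the two observations. We have $\{u_n\}\subset\ell_p$ with $\|u_n\|_\infty\geqslant\e$ for all $n$, and $\sup\{\|u_n\|\qpm:n,m\}<\infty$. Since $\|\cdot\|\qpm\sim\|\cdot\|_p$ with constant depending on $m$, boundedness of $\sup_{n,m}\|u_n\|\qpm$ together with $\|u_n\|_\infty\geqslant\e$ is the situation that Lemma \ref{02} shows is impossible once we also know $\|u_n\|_\infty\to 0$—so the contradiction must come from first extracting, inside the non-$\ell_\infty$-null block sequence, a genuine $\ell_\infty$-null one. This is exactly the classical dichotomy for block bases of $\ell_p$: any normalized block sequence $\{u_n\}$ of the unit vector basis of $\ell_p$ either has a subsequence equivalent to the $\ell_p$ basis (in which case it IS $\ell_\infty$-null) or has a subsequence with $\liminf\|u_n\|_\infty>0$, and in the latter case a further perturbation gives a subsequence $\{u_{n_j}\}$ with a common coordinate carrying mass $\geqslant\e/2$; but then $u_{n_j}-u_{n_{j'}}$ restricted appropriately again produces an $\ell_\infty$-null normalized block sequence. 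Running this, we obtain inside $Y$ a block subspace $Z$ whose normalized block basis $\{z_n\}$ satisfies $\|z_n\|_\infty\to 0$, contradicting the assumption, and thereby proving the lemma. The main obstacle I anticipate is the second step: carefully justifying that ``no block subspace has $\ell_\infty$-null normalized basis'' forces, after one subsequence, $\inf_n\|u_n\|_\infty>0$, and then performing the disjointification inside $D$ (not just in $\ell_p$) so that the resulting vectors are still a genuine block sequence of $\{\tilde e_n\}_{n\inn}$ in $D$ with the $\ell_\infty$-norms of their $\ell_p$-coordinates going to $0$—this requires being careful that subtracting tails in $\ell_p$ corresponds to an honest block operation on the symmetric basis, which it does since the basis is $1$-symmetric and the supports in $D$ are inherited coordinatewise from $\ell_p$.
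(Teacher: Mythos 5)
There is a genuine gap, and it sits exactly where you yourself flag ``the main obstacle'': the case $\|y_n\|_\infty\geqslant\e$ is never actually resolved. The two tools you propose for it do not work. First, the ``classical dichotomy'' is not a dichotomy: every seminormalized block sequence of the unit vector basis of $\ell_p$ is equivalent to that basis, whether or not its $\ell_\infty$-norms tend to zero (take $u_n=e_n$), so ``equivalent to the $\ell_p$ basis, in which case it is $\ell_\infty$-null'' is simply false. Second, the vectors $u_n=j(y_n)$ are disjointly supported, so there is no ``common coordinate carrying mass $\geqslant\e/2$'' to locate and subtract off; that perturbation trick belongs to non-block sequences that converge coordinatewise, not to block bases. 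Consequently the sentence ``Running this, we obtain inside $Y$ a block subspace $Z$ whose normalized block basis satisfies $\|z_n\|_\infty\to 0$'' asserts the conclusion rather than deriving it, and the whole contradiction framework (together with the observation about $\sup\{\|u_n\|\qpm:n,m\}$ and the attempted appeal to Lemma \ref{02}, which you correctly note has the wrong hypotheses here) never engages.

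The missing idea is a direct averaging argument; no contradiction is needed. In the bad case $\|y_n\|_\infty>\e$ for all $n$, fix $k\inn$ and a finite set $I$ and consider $w=\frac1k\sum_{i\in I}y_i$. Since the coordinates of a normalized vector with respect to the normalized $1$-symmetric basis of $D$ are at most $1$ in modulus, $\|w\|_\infty\leqslant\frac1k$. On the other hand the $j(y_i)$ are disjointly supported in $\ell_p$, each with a coordinate of modulus greater than $\e$, so $\|j(w)\|_p\geqslant\frac{\e}{k}|I|^{\frac1p}$ and hence $\|w\|_D\geqslant\|j\|^{-1}\frac{\e}{k}|I|^{\frac1p}$. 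Taking $|I|$ large compared with $k$, the normalized vector $w/\|w\|_D$ has $\ell_\infty$-norm at most $\|j\|\,\e^{-1}|I|^{-\frac1p}$, which can be made as small as desired; successive such averages over consecutive blocks of indices give the required block subspace $Z$. This is exactly the paper's proof, and it is the quantitative lower $\ell_p$ estimate supplied by the continuity of $j$ --- which you introduce but never exploit --- that makes the lemma work. Lemmas \ref{02} and \ref{03} are used later (for Proposition \ref{07}), not here.
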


\begin{proof}
Let $\{y_n\}_{n\inn}$ be a normalized block basis of $Y$. If, by
passing to a subsequence, $\|y_n\|_\infty\rightarrow 0$, then  we
are ok. Otherwise, again by passing to a subsequence, there is
$\e>0$, such that $\|y_n\|_\infty>\e$, for all $n\inn$.

Denote by $j$ the map $j:D\rightarrow\ell_p$, with
$j\left(\sum_{i=1}^\infty a_i \tilde{e}_i\right) =
\sum_{i=1}^\infty a_i e_i$. As it can be seen in Davis' proof, $j$
is continuous. It follows that for $I$ a finite subset of the
naturals, $k\inn$: $\|\sum_{i\in I}\frac{1}{k}y_i\|_Y \geqslant
\|j\|^{-1}\frac{\e}{k}|I|^\frac{1}{p}$ and of course $\|\sum_{i\in
I}\frac{1}{k}y_i\|_\infty \leqslant \frac{1}{k}$.

Thus we can inductively construct a seminormalized block sequence
$\{z_n\}_{n\inn}$ of $\{y_n\}_{n\inn}$ such that
$\|z_n\|_\infty\rightarrow 0$.

\end{proof}

\begin{prp} Let $\{y_n\}_{n\inn}$ be a normalized bounded sequence
in $D$, such that $\lim_{n\to\infty}\|y_n\|_\infty = 0$. Then
$\{y_n\}_{n\inn}$ has a subsequence which is equivalent to a block
sequence in $X$. \label{07}
\end{prp}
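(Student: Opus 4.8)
The plan is to exploit the product structure of $\mathfrak{X}\cong\mathfrak{X}'=\big(\sum_k\oplus(\ell_p,\|\cdot\|\qpmk)\big)_X$ together with the hypothesis $\|y_n\|_\infty\to0$, which by Lemma \ref{03} forces the ``low coordinates'' of the $y_n$ (in the $\mathfrak{X}'$ decomposition, i.e.\ the blocks indexed by small $k$) to be negligible. First I would fix a sequence $\e_j\searrow0$ and, using a diagonal argument, pass to a subsequence of $\{y_n\}_{n\inn}$ so that for each $n$ the vector $y_n$ is, up to error $\e_n$, supported (in the $\mathfrak{X}'$ sense) on coordinates $k$ with $m_k\geqslant m_0(n)$ for some rapidly increasing sequence $m_0(n)$. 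Concretely: since $\|y_n\|_\infty\to0$ and $\sup_{n,m\in M}\|y_n\|\qpm<\infty$ is automatic (the $y_n$ are bounded in $D$, hence each coordinate is bounded in the corresponding $\|\cdot\|\qpmk\sim\|\cdot\|_p$ norm, uniformly), Lemma \ref{03} applies to the coordinate sequences and gives exactly this smallness on the initial block of coordinates.

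Next I would perturb: replace $y_n$ by $y_n'$ obtained by deleting the first $N_n$ coordinates (those with small $m_k$), where $N_n\uparrow\infty$ is chosen so that $\|y_n-y_n'\|_D<\e_n$; this is legitimate because the $\mathfrak{X}'$-norm is computed by the $1$-unconditional norm of $X$ on the sequence of coordinate norms, so discarding coordinates only decreases the norm and the discarded part is controlled by the previous step. After a further diagonal extraction I may assume the $y_n'$ have ``successive'' coordinate supports, i.e.\ $\max\supp(y_n')<\min\supp(y_{n+1}')$ in the index $k$. On each such coordinate block, because $m_k$ is large, the norm $\|\cdot\|\qpmk$ is close to $\frac{1}{m_k}\|\cdot\|_p$ on the relevant vectors — more precisely, when $\|x\|_\infty$ is small relative to $\|x\|_p$ and $m_k$ is large, the infimum defining $\|x\|\qpmk$ is nearly achieved by the pure-$\ell_p$ choice, so $\|x\|\qpmk$ behaves like a fixed multiple of $\|x\|_p$ up to a factor tending to $1$. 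Thus $\|y_n'\|_D$ is equivalent, with constants tending to $1$, to the $X$-norm of the sequence of $\ell_p$-norms of the blocks of $y_n'$, which is the $X$-norm of a normalized block sequence $\{w_n\}$ of the basis of $X$ (one block $w_n$ per coordinate block of $y_n'$).

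Finally I would assemble this into the equivalence: for any scalars $a_1,\dots,a_N$, $\big\|\sum a_n y_n'\big\|_D$ is, up to a multiplicative factor $\to1$, equal to $\big\|\sum a_n w_n\big\|_X$, using that the supports are successive both in the $k$-index and (after the perturbation) in the $\ell_p$-coordinates, so there is no interference between distinct $y_n'$. The original sequence $\{y_n\}$ is then equivalent to $\{w_n\}$ by the standard small-perturbation principle (the errors $\e_n$ are summable after a final extraction). The main obstacle I anticipate is making the ``$\|\cdot\|\qpmk$ is nearly $c_k\|\cdot\|_p$ on low-$\ell_\infty$ vectors'' estimate uniform and quantitative enough to survive summation over blocks of varying lengths — this is where Lemma \ref{02} and Lemma \ref{03}, and the Hölder-interpolation idea already used in the proof of Lemma \ref{02}, do the real work, controlling the $\ell_q$-contribution to the infimum by $\|x\|_\infty^{1-q/p}\|x\|_p^{q/p}$ and hence forcing the $\ell_p$-part to dominate once $m_k$ is large.
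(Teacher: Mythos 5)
Your overall strategy is exactly the paper's: the proof given there is a single sentence invoking Lemma \ref{03} together with a sliding hump with respect to the decomposition $\mathfrak{X}'=\big(\sum_{k}\bigoplus(\ell_p,\|\cdot\|\qpmk)\big)_X$, and your first step (boundedness of the coordinate norms plus $\|y_n\|_\infty\to 0$ feeding into Lemma \ref{03} to make the initial $k$-coordinates uniformly negligible) is the intended one. So the route is the same; but two points in the middle of your argument need repair.

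First, deleting only the first $N_n$ coordinates of $y_n$ produces vectors supported on the tails $(N_n,\infty)$ of the $k$-index, which are nested, not successive; you must also truncate the far tail of each $y_n$. This is harmless, since each $y_n$ is the diagonal vector $(x_n,x_n,\dots)$ and the sequence of coordinate norms $\big(\|x_n\|\qpmk\big)_{k\inn}$ lies in $X$, so its tail is small in $X$-norm for each fixed $n$ --- but it is the step that actually yields disjoint supports. Second, the claim that on the surviving coordinates $\|\cdot\|\qpmk$ is nearly $\frac{1}{m_k}\|\cdot\|_p$ has the scaling backwards: the pure $\ell_p$ decomposition $x/\lambda\in\frac{1}{m}\blp$ forces $\lambda\geqslant m\|x\|_p$, so on spread-out vectors $\|x\|\qpm$ behaves like $m\|x\|_p$; it is vectors in $\ell_q$ whose $\|\cdot\|\qpm$-norm decays like $1/m$ (Remark 2 after Definition \ref{01}). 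More importantly, this whole comparison is unnecessary. Once the truncated vectors $y_n''$ are disjointly supported over index sets $I_n$ in the $k$-coordinate, the definition of the norm of the $X$-sum gives $\big\|\sum_n a_n y_n''\big\|_{\mathfrak{X}'}=\big\|\sum_n |a_n| w_n\big\|_X=\big\|\sum_n a_n w_n\big\|_X$, where $w_n=\sum_{k\in I_n}\|x_n\|\qpmk f_k$ is a block vector with respect to the (1-unconditional) basis $\{f_k\}_{k\inn}$ of $X$; no identification of $\|\cdot\|\qpmk$ with a multiple of $\|\cdot\|_p$, and no disjointness of the $\ell_p$-supports of the $x_n$ (which you cannot arrange, as the $y_n$ are not assumed to be a block sequence in $D$), is required. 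Consequently the ``main obstacle'' you anticipate --- making the $\|\cdot\|\qpmk\approx c_k\|\cdot\|_p$ estimate uniform over blocks --- is not part of this proof at all; the H\"older argument lives entirely inside Lemmas \ref{02} and \ref{03} and is used only to kill the initial coordinates, not to renormalize the surviving ones.
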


\begin{proof}
By using Lemma \ref{03} and a sliding hump argument, with respect
to the decomposition
$\left\{\left(\ell_p,\|\cdot\|\qpmk\right)\right\}_{k\inn}$ of
$\mathfrak{X}^\prime$, the result follows.
\end{proof}

Since every subspace of $D$ contains a further subspace which is
isomorphic to a block subspace of $D$, it follows that $D$ is
saturated with subspaces of $X$.

\section{Uniformly Convex Schreier-Baernstein Spaces}

We begin by presenting some key definitions and results from
\cite{e}.

\begin{dfn} Let $X$ be a Banach space with a 1-unconditional basis
$\{e_n\}_{n\inn}$. The norm on $X$ is said to be $p$-convex
($q$-concave), if $\|\sum_{i=1}^n(|a_i|^p +
|b_i|^p)^\frac{1}{p}e_i\| \leqslant \big(\|\sum_{i=1}^na_ie_i\|^p
+
\|\sum_{i=1}^nb_ie_i\|^p\big)^\frac{1}{p}\quad\Big(\|\sum_{i=1}^n(|a_i|^q
+ |b_i|^q)^\frac{1}{q}e_i\| \geqslant
\big(\|\sum_{i=1}^na_ie_i\|^q +
\|\sum_{i=1}^nb_ie_i\|^q\big)^\frac{1}{q}\Big)$.

The norm on $X$ is said to satisfy an upper $\ell_p$ estimate
(lower $\ell_q$ estimate) if $\|x + y\|\leqslant \big(\|x\|^p +
\|y\|^p\big)^\frac{1}{p}\quad\Big(\|x + y\|\geqslant \big(\|x\|^q
+ \|y\|^q\big)^\frac{1}{q}\Big)$, whenever $x$ and $y$ are
disjointly supported with respect to the basis $\{e_n\}_{n\inn}$.
\end{dfn}

It is immediate, that if the norm on $X$ is $p$-convex
($q$-concave), then it satisfies an upper $\ell_p$ estimate (lower
$\ell_q$ estimate).

The following two results are restatements of Remark 3.2 and Lemma
3.1 respectively, from \cite{e}.

\begin{thm} Let $X$ be a uniformly convex Banach space with a
1-unconditional basis. Then there exists an equivalent
1-unconditional norm on $X$, which is $p$-convex and $q$-concave
for some $1<p\leqslant q<\infty$. \label{08}
\end{thm}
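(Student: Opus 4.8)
The plan is to reduce the claim about uniform convexity of $X$ (with a $1$-unconditional basis) to the existence of an equivalent renorming that is simultaneously $p$-convex and $q$-concave, and for this the natural route is via a quantitative duality between uniform convexity and upper/lower $\ell_r$ estimates for lattice norms. First I would invoke a classical fact (due to Figiel--Johnson, or available through the lattice machinery in \cite{e}): a Banach lattice with a $1$-unconditional basis is uniformly convex if and only if it satisfies an upper $\ell_s$ estimate and a lower $\ell_t$ estimate (equivalently is $s$-convex and $t$-concave after renorming) with a uniform modulus. So the substance is to extract the convexity/concavity exponents from uniform convexity, not merely the estimates themselves, and then to average the norm to upgrade the estimates to genuine convexity/concavity.

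Concretely, I would proceed in three steps. Step one: show that if $X$ with a $1$-unconditional basis is uniformly convex, then its norm satisfies a lower $\ell_q$ estimate for some $q<\infty$, i.e. there is $C$ with $\|x+y\| \ge C(\|x\|^q+\|y\|^q)^{1/q}$ for disjointly supported $x,y$ (iterating the modulus of convexity applied to sums of many disjoint blocks gives a power-type lower estimate, and Maurey--Pisier-type arguments pin down a finite exponent $q$); simultaneously, reflexivity of $X$ forces the dual $X^*$ to be uniformly convex too, hence by duality $X$ satisfies an upper $\ell_p$ estimate for some $p>1$. Step two: pass from the upper $\ell_p$/lower $\ell_q$ estimates to an equivalent norm that is honestly $p'$-convex and $q'$-concave (for slightly perturbed exponents), using the standard convexification/concavification construction: define $\|x\|_{\text{new}} = \||x|^{p'}\|_{X}^{1/p'}$ type formulas, or use the Krivine functional calculus on the lattice, to manufacture a norm for which the defining inequalities hold with constant $1$. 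Step three: verify that the new norm is still $1$-unconditional (immediate, since the construction only involves $|x|$) and uniformly equivalent to the original, so no essential structure is lost.

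The main obstacle I expect is Step one --- specifically, producing a \emph{finite} concavity exponent $q$ rather than just some superlinear lower estimate. Uniform convexity a priori only gives that finite disjoint sums grow faster than a fixed modulus allows, and turning "faster than modulus" into "at least like $\ell_q$ for a specific finite $q$" requires the submultiplicativity/regularity of the modulus of convexity (a theorem of Figiel, building on Nordlander): the modulus $\delta_X(\e)$ of a uniformly convex space is bounded below by $c\e^q$ for some $q\ge 2$. With that in hand the lower $\ell_q$ estimate for disjoint vectors follows by a telescoping argument over dyadic groupings of the summands, and the upper $\ell_p$ estimate is then obtained by the same reasoning applied to the (uniformly convex, since $X$ is reflexive) dual lattice $X^*$, translating its lower $\ell_{p'}$ estimate into an upper $\ell_p$ estimate on $X$ by duality of $\ell_r$-estimates in Banach lattices. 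Once both one-sided estimates are secured, the convexification in Steps two and three is routine and the only care needed is to keep the perturbed exponents in the open range $1<p\le q<\infty$.
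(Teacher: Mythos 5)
The paper offers no proof of this statement at all: it is quoted as a restatement of Remark 3.2 of Figiel and Johnson \cite{e}, so your sketch has to be measured against the standard argument rather than anything internal to the paper. Your three-step plan (extract upper and lower disjoint $\ell_r$-estimates from uniform convexity, then upgrade to exact $p$-convexity and $q$-concavity by a lattice renorming) is indeed the standard route, but Step one contains a concretely false claim on which one of the two estimates rests: you assert that ``reflexivity of $X$ forces the dual $X^*$ to be uniformly convex too.'' The dual of a uniformly convex space is uniformly \emph{smooth}, and uniform smoothness does not imply uniform convexity; what is true is that $X^*$ is superreflexive and therefore merely \emph{admits} an equivalent uniformly convex norm (Enflo \cite{t}). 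If you go that way you must additionally arrange the renorming of $X^*$ to be $1$-unconditional for the biorthogonal basis and check that the duality between upper and lower estimates survives the change of norm; none of this is addressed, so as written the derivation of the second estimate does not go through.

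The gap is repairable without any duality, and in fact more simply than you propose. For disjointly supported normalized $x,y$, $1$-unconditionality gives $\|x+y\|=\|x-y\|\geqslant 1$. Applying the modulus of convexity to the pair $(x,y)$ yields $\|x+y\|\leqslant 2(1-\delta_X(1))$, which after dyadic grouping is the \emph{upper} $\ell_p$-estimate for some $p>1$; applying it instead to the normalized pair $(x+y)/\|x+y\|$, $(x-y)/\|x+y\|$, whose sum is $2x/\|x+y\|$, forces $\|x+y\|\geqslant 1+c$ with $c>0$ depending only on $\delta_X(1)$, and this iterates to the \emph{lower} $\ell_q$-estimate for some $q<\infty$. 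Note that your attribution is therefore also reversed: the direct iteration of the modulus on disjoint sums gives the upper estimate, not the lower one, and neither Figiel's power-type bound on $\delta_X$ nor a Maurey--Pisier argument is needed. Finally, in Step two the formula $\|x\|_{\mathrm{new}}=\||x|^{p'}\|_X^{1/p'}$ is the $p'$-convexification of the lattice, which changes the underlying sequence space rather than renorming $X$; the correct device is the passage from disjoint estimates to $p$-convexity and $q$-concavity with slightly perturbed exponents followed by the renorming that makes both constants equal to $1$ (Lindenstrauss--Tzafriri, Vol.~II, 1.f.7 and 1.d.8). With these corrections your plan does yield the theorem.
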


\begin{rmk} Provided that the initial norm on $X$ is spreading or 1-symmetric,
by it's own definition, it follows that the same is true for the
equivalent norm of Theorem \ref{08}.
\end{rmk}

\begin{lem} Let $X$ be a Banach space with a 1-unconditional
basis. If the norm on $X$ is $p$-convex and satisfies a lower
$\ell_q$ estimate, for some $1<p\leqslant q<\infty$, then $X$ is
uniformly convex. \label{09}
\end{lem}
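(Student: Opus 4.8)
The plan is to estimate the modulus of convexity of the \emph{given} norm directly, without any renorming: with arbitrary constants the statement would be false (the norm of $\ell_\infty^2$ is $2$-convex and satisfies a lower $\ell_2$-estimate, yet is not even strictly convex), and the definitions in the preceding section fix both the $p$-convexity and the lower $\ell_q$-estimate to hold with constant $1$, which is exactly what makes the estimate work. Since the basis $\{e_n\}_{n\inn}$ is $1$-unconditional, the norm is monotone in the coordinatewise absolute values, and I use this throughout. Fix $x=\sum_i x_ie_i$ and $y=\sum_i y_ie_i$ with $\|x\|=\|y\|=1$ and $\|x-y\|\geqslant\e$, and set $u=\frac12(x+y)$, $v=\frac12(x-y)$, so that $\|v\|\geqslant\e/2$; it then suffices to produce $\de=\de(\e)>0$, depending only on $\e,p,q$, with $\|u\|\leqslant 1-\de$. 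Put $w_i=\big((|x_i|^p+|y_i|^p)/2\big)^{1/p}$ and $w=\sum_i w_ie_i$. First I would record two pointwise ingredients: (a) $p$-convexity with constant $1$ gives $\|w\|\leqslant 2^{-1/p}(\|x\|^p+\|y\|^p)^{1/p}=1$; (b) Clarkson's inequalities supply an exponent $r=r(p)\in[2,\infty)$ — namely $r=p$ if $p\geqslant 2$ and $r=p/(p-1)$ if $1<p<2$ — such that $\big|\tfrac{a+b}{2}\big|^{r}+\big|\tfrac{a-b}{2}\big|^{r}\leqslant\big(\tfrac{|a|^p+|b|^p}{2}\big)^{r/p}$ for all real $a,b$, and hence, coordinatewise, $|u_i|^{r}+|v_i|^{r}\leqslant w_i^{r}$ for every $i$; in particular $|v_i|\leqslant w_i$.

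Next I would fix $\tau=\e/4$ and split $\mathbb{N}=L\cup S$, where $L=\{i:|v_i|\geqslant\tau w_i\}$ and $S$ is its complement. On $S$ one has $\big\|\sum_{i\in S}v_ie_i\big\|\leqslant\tau\big\|\sum_{i\in S}w_ie_i\big\|\leqslant\tau$, so $\big\|\sum_{i\in L}v_ie_i\big\|\geqslant\e/2-\tau=\e/4$, and since $|v_i|\leqslant w_i$ this forces $\big\|\sum_{i\in L}w_ie_i\big\|\geqslant\e/4=:\alpha$. On $L$ the inequality $|u_i|^{r}+|v_i|^{r}\leqslant w_i^{r}$ sharpens to $|u_i|\leqslant(1-\tau^{r})^{1/r}w_i=:\rho\,w_i$ with $\rho<1$, while on $S$ we keep only $|u_i|\leqslant w_i$. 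Writing $w|_S,w|_L$ for the restrictions of $w$ to $S$ and $L$, monotonicity gives $\|u\|\leqslant\|w|_S+\rho\,w|_L\|=\|\rho\,w+(1-\rho)w|_S\|\leqslant\rho\|w\|+(1-\rho)\|w|_S\|\leqslant\rho+(1-\rho)\|w|_S\|$. Finally, applying the lower $\ell_q$-estimate to the disjointly supported pair $w|_S,w|_L$ yields $\|w|_S\|^q+\|w|_L\|^q\leqslant\|w\|^q\leqslant 1$, so $\|w|_S\|\leqslant(1-\alpha^q)^{1/q}<1$, and therefore $\|u\|\leqslant\rho+(1-\rho)(1-\alpha^q)^{1/q}=1-\de$ with $\de=(1-\rho)\big(1-(1-\alpha^q)^{1/q}\big)>0$ depending only on $\e,p,q$. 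This establishes uniform convexity.

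The step I expect to be the main obstacle is translating the cancellation hidden in $x-y$ into something the lattice hypotheses can exploit: the lower $\ell_q$-estimate only sees disjointly supported vectors, so the whole argument hinges on isolating the coordinate set $L$ on which $v$ is non-negligible relative to $w$, checking that $L$ still carries a definite fraction of the $w$-mass (this is where $\|v\|\geqslant\e/2$ and $\|w\|\leqslant 1$ must be played against each other), and then converting the uniform coordinatewise contraction $\rho<1$ on $L$ into an honest drop in norm via disjointness. The scalar Clarkson inequality, in the two-case form above, is precisely what does the cancellation bookkeeping one coordinate at a time; the split according to whether $p\geqslant 2$ or $p<2$ is unavoidable there but harmless, and everything else is routine.
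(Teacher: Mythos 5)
Your argument is correct, but note that the paper does not actually prove this lemma: it is presented as a restatement of Lemma 3.1 of Figiel--Johnson \cite{e}, so there is no in-paper proof to compare against, and what you supply is a complete, self-contained, quantitative argument in the classical lattice style. The mechanism is sound: the scalar Clarkson inequality converts $\|x-y\|\geqslant\e$ into a coordinatewise contraction $|u_i|\leqslant\rho w_i$ on a set $L$ of coordinates; $1$-unconditionality together with the constant-$1$ $p$-convexity keeps the majorant $w$ in the unit ball; and the constant-$1$ lower $\ell_q$-estimate is exactly what turns the fact that $L$ carries $w$-mass at least $\e/4$ into the definite drop $\|w|_S\|\leqslant(1-(\e/4)^q)^{1/q}$. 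The individual steps all check out --- the identity $w|_S+\rho\,w|_L=\rho w+(1-\rho)w|_S$, the triangle inequality applied to it, the disjointness needed for the lower estimate, and the fact that the resulting $\de$ depends only on $\e,p,q$ --- and your opening remark that the constants must be $1$ (as they are in the paper's definitions) is apt, since uniform convexity is not an isomorphic property. The only cosmetic omission is that you implicitly assume $w=\sum_iw_ie_i$ defines an element of $X$; this follows from $w_i\leqslant|x_i|+|y_i|$ together with $1$-unconditionality (the tails of $w$ are dominated by the tails of $\sum_i(|x_i|+|y_i|)e_i$), and deserves one line.
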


Let $X$ be a Banach space with a 1-unconditional basis
$\{e_n\}_{n\inn}$.

We denote by $\sch$ the Schreier family $\sch =
\{F\subset\mathbb{N} : \min F \geqslant |F|\}$. Let $1\leqslant
r<\infty$. Define the following norm on $c_{00}(\mathbb{N})$:

\begin{equation*}
\|x\|\SB{X}{r} =
\sup\Big\{\big(\sum_{j=1}^d\|F_jx\|_X^r\big)^\frac{1}{r}\Big\}
\end{equation*}
Where the supremum is taken over all finite sequences
$\{F_j\}_{j=1}^d\subset\sch$ which are pairwise disjoint. Define
the Schreier-Baernstein space $SB\SB{X}{r}$, to be the completion
of $c_{00}(\mathbb{N})$ with the aforementioned norm.

It can be easily seen that the usual basis of $c_{00}(\mathbb{N})$
forms a 1-unconditional basis of $SB\SB{X}{r}$.

\begin{prp}Let $X$ be a Banach space with a 1-unconditional
basis $\{e_n\}_{n\inn}$, with a $p$-convex norm $\|\cdot\|_X$, for
some $1<p\leqslant\infty$. Let $r\geqslant p$. Then the space
$SB\SB{X}{r}$ is uniformly convex. \label{10}
\end{prp}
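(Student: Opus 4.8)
The plan is to show that $SB_{X,r}$ is uniformly convex by invoking Lemma \ref{09}, so it suffices to establish that the norm $\|\cdot\|_{X,r}$ is $p$-convex (equivalently, satisfies an upper $\ell_p$ estimate is not quite enough — we need the full $p$-convexity hypothesis of Lemma \ref{09}, so I will prove $p$-convexity directly) and that it satisfies a lower $\ell_q$ estimate for some $q\geqslant p$, $q<\infty$. The lower estimate is the easy half: since the basis is $1$-unconditional and the norm is a sup of $\ell_r$-combinations of the $\|F_jx\|_X$, disjointly supported vectors $x,y$ automatically satisfy $\|x+y\|_{X,r}\geqslant(\|x\|_{X,r}^r+\|y\|_{X,r}^r)^{1/r}$: given near-optimal disjoint Schreier families $\{F_j\}$ for $x$ and $\{G_i\}$ for $y$, one can keep those subsets of the $F_j$'s meeting $\operatorname{supp}(x)$ and those subsets of the $G_i$'s meeting $\operatorname{supp}(y)$, throwing away the parts that straddle the gap (this may shrink each set but a subset of a Schreier set is Schreier, and $\|F_j'x\|_X=\|F_jx\|_X$ when $F_j'=F_j\cap\operatorname{supp}(x)$ by $1$-unconditionality), and then the two families together form one admissible disjoint family. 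Hence $SB_{X,r}$ satisfies a lower $\ell_r$ estimate, which is a lower $\ell_q$ estimate with $q=r$.

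For the upper estimate — in fact $p$-convexity — the point is to show $\big\|\sum_i(|a_i|^p+|b_i|^p)^{1/p}e_i\big\|_{X,r}\leqslant\big(\|\sum a_ie_i\|_{X,r}^p+\|\sum b_ie_i\|_{X,r}^p\big)^{1/p}$. Fix an admissible disjoint family $\{F_j\}_{j=1}^d$ witnessing the left-hand norm up to $\varepsilon$. For each $j$, the vector $F_j\big(\sum_i(|a_i|^p+|b_i|^p)^{1/p}e_i\big)$ is exactly $\sum_{i\in F_j}(|a_i|^p+|b_i|^p)^{1/p}e_i$, so by $p$-convexity of $\|\cdot\|_X$ we get $\|F_j(\cdots)\|_X^p\leqslant\|F_j(\sum a_ie_i)\|_X^p+\|F_j(\sum b_ie_i)\|_X^p$. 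Summing over $j$ and using $r\geqslant p$ together with the elementary inequality $\big(\sum_j(\alpha_j+\beta_j)^{r/p}\big)^{p/r}\leqslant\big(\sum_j\alpha_j^{r/p}\big)^{p/r}+\big(\sum_j\beta_j^{r/p}\big)^{p/r}$ (this is the triangle inequality in $\ell_{r/p}$, valid precisely because $r/p\geqslant1$), one bounds the expression by $\big(\sum_j\|F_j\sum a_ie_i\|_X^r\big)^{p/r}+\big(\sum_j\|F_j\sum b_ie_i\|_X^r\big)^{p/r}$, and each of these two sums is at most $\|\sum a_ie_i\|_{X,r}^r$, resp. $\|\sum b_ie_i\|_{X,r}^r$, by definition of the norm (same family $\{F_j\}$ is admissible for each). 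Letting $\varepsilon\to0$ gives $p$-convexity.

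I expect the main subtlety to be bookkeeping rather than anything deep: namely, being careful that the \emph{same} disjoint Schreier family $\{F_j\}$ achieving the supremum for the combined vector is admissible for the individual vectors $\sum a_ie_i$ and $\sum b_ie_i$ — it is, since admissibility of $\{F_j\}$ (each $F_j\in\sch$, pairwise disjoint) makes no reference to the vector — and that the inequality $(\sum(\alpha_j+\beta_j)^s)^{1/s}\leqslant(\sum\alpha_j^s)^{1/s}+(\sum\beta_j^s)^{1/s}$ is being used in the correct direction, which needs $s=r/p\geqslant1$, i.e. exactly the hypothesis $r\geqslant p$. One small point worth checking: when $p=\infty$ the statement degenerates and $SB_{X,r}$ with $r\geqslant p$ would force $r=\infty$, which is excluded by $r<\infty$ in the hypothesis $1\leqslant r<\infty$; so implicitly $p<\infty$ here, and if $X$ is merely $\infty$-convex the proposition is vacuous — but in the intended application $X$ will be genuinely $p$-convex for finite $p$ via Theorem \ref{08}. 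With $p$-convexity and a lower $\ell_r$ estimate in hand, Lemma \ref{09} applies with this $p$ and $q=r$, and $SB_{X,r}$ is uniformly convex.
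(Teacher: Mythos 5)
Your proposal is correct and follows essentially the same route as the paper: establish $p$-convexity of $\|\cdot\|_{X,r}$ via the $p$-convexity of $\|\cdot\|_X$ on each $F_j$ followed by the triangle inequality in $\ell_{r/p}$ (valid since $r\geqslant p$), establish the lower $\ell_r$ estimate for disjointly supported vectors by merging the two near-optimal disjoint Schreier families, and conclude by Lemma \ref{09}. The additional bookkeeping you flag (admissibility of the same family for the individual vectors, restriction of Schreier sets to supports) is exactly what the paper's computation implicitly uses.
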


\begin{proof}

We will show that the demands of Lemma \ref{09} are satisfied.
First we show that $\|\cdot\|\SB{X}{r}$ is $p$-convex. Let
$\{a_i\}_{i=1}^n, \{b_i\}_{i=1}^n\subset\mathbb{R}$. Then for some
$\{F_j\}_{j=1}^d\subset\sch$ and by the $p$-convexity of the norm
on $X$:

\begin{eqnarray*}
\Big\|\sum_{i=1}^n\big(|a_i|^p\! +\!
|b_i|^p\big)^\frac{1}{p}e_i\Big\|\SB{X}{r}\!\!\!\! &=&\! \Bigg(
\sum_{j=1}^d\Big\|\sum_{i\in F_j}\big(|a_i|^p +
|b_i|^p\big)^\frac{1}{p}e_i\Big\|_X^r\Bigg)^\frac{1}{r}\\
&\leqslant&\!\! \Bigg( \sum_{j=1}^d\Big(\big\|\sum_{i\in
F_j}a_ie_i\big\|_X^p + \big\|\sum_{i\in
F_j}b_ie_i\big\|_X^p\Big)^\frac{r}{p}\Bigg)^{\frac{p}{r}\frac{1}{p}}\\
&\leqslant&\!\!\Bigg(\Big(\sum_{j=1}^d\big\|\sum_{i\in
F_j}a_ie_i\|_X^r\Big)^\frac{p}{r}\!\! +\!
\Big(\sum_{j=1}^d\big\|\sum_{i\in
F_j}b_ie_i\|_X^r\Big)^\frac{p}{r}\Bigg)^\frac{1}{p}\\
&\leqslant&\!\!\big(\|\sum_{i=1}^na_ie_i\|^p\SB{X}{r} +
\|\sum_{i=1}^nb_ie_i\|^p\SB{X}{r}\big)^\frac{1}{p}
\end{eqnarray*}
Thus $\|\cdot\|\SB{X}{r}$ is $p$-convex. Moreover, if $x,y$ are
finitely disjointly supported, then for some $\{F_j\}_{j=1}^{d_1},
\{E_j\}_{j=1}^{d_2}$:

\begin{equation*}
\|x\|\SB{X}{r} =
\big(\sum_{j=1}^{d_1}\|F_jx\|_X^r\big)^\frac{1}{r}\quad\text{and}\quad\|y\|\SB{X}{r}
= \big(\sum_{j=1}^{d_2}\|E_jy\|_X^r\big)^\frac{1}{r}
\end{equation*}
We may clearly assume that $F_i\cap E_j = \varnothing$ for all
$i,j$. Then:

\begin{equation*}
\|x+y\|^r\SB{X}{r} \geqslant \sum_{j=1}^{d_1}\|F_jx\|_X^r +
\sum_{j=1}^{d_2}\|E_jy\|_X^r = \|x\|^r\SB{X}{r} + \|y\|^r\SB{X}{r}
\end{equation*}
Thus $\|\cdot\|\SB{X}{r}$ satisfies a lower $\ell_r$ estimate and
the space $SB\SB{X}{r}$ is uniformly convex.

\end{proof}

\begin{prp}Let $X$ be a Banach space with a
1-unconditional basis $\{e_n\}_{n\inn}$, with a norm
$\|\cdot\|_X$, which satisfies a lower $\ell_q$ estimate, for some
$1\leqslant q<\infty$. Let $r\geqslant q$. Let $E\in\sch,
\{a_i\}_{i\in E}\subset\mathbb{R}$. Then $\big\|\sum_{i\in
E}a_ie_i\big\|\SB{X}{r} = \big\|\sum_{i\in E}a_ie_i\big\|_X$.
\label{11}
\end{prp}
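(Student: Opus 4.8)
The plan is to establish the two inequalities $\big\|\sum_{i\in E}a_ie_i\big\|\SB{X}{r}\geqslant\big\|\sum_{i\in E}a_ie_i\big\|_X$ and $\big\|\sum_{i\in E}a_ie_i\big\|\SB{X}{r}\leqslant\big\|\sum_{i\in E}a_ie_i\big\|_X$ separately. Write $x=\sum_{i\in E}a_ie_i$. The first inequality is immediate: since $E\in\sch$, the one-element family $\{E\}$ is an admissible choice in the supremum defining $\|\cdot\|\SB{X}{r}$, and $Ex=x$, so $\|x\|\SB{X}{r}\geqslant\|Ex\|_X=\|x\|_X$.

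For the reverse inequality, I would fix an arbitrary finite pairwise disjoint family $F_1,\ldots,F_d\in\sch$. Since $\supp x\subseteq E$, we have $F_jx=(F_j\cap E)x$ for every $j$, so replacing each $F_j$ by $F_j\cap E$ changes none of the vectors $F_jx$; hence we may assume $F_j\subseteq E$ for all $j$, and in particular $F_1,\ldots,F_d$ are pairwise disjoint subsets of $E$. Consequently the vectors $F_1x,\ldots,F_dx$ are disjointly supported with respect to $\{e_n\}_{n\inn}$, and iterating the lower $\ell_q$ estimate of the hypothesis yields
\begin{equation*}
\Big(\sum_{j=1}^d\|F_jx\|_X^q\Big)^{1/q}\leqslant\Big\|\sum_{j=1}^dF_jx\Big\|_X.
\end{equation*}
Moreover $\sum_{j=1}^dF_jx$ is obtained from $x$ by the coordinate projection onto the subset $\bigcup_{j=1}^dF_j\subseteq E$ of the basis, so $1$-unconditionality of $\{e_n\}_{n\inn}$ gives $\big\|\sum_{j=1}^dF_jx\big\|_X\leqslant\|x\|_X$.

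It then remains to pass from the $\ell_q$-sum to the $\ell_r$-sum. Because $r\geqslant q$, for non-negative reals $t_1,\ldots,t_d$ one has $\big(\sum_{j}t_j^r\big)^{1/r}\leqslant\big(\sum_{j}t_j^q\big)^{1/q}$; applying this with $t_j=\|F_jx\|_X$ and combining with the two bounds above gives $\big(\sum_{j=1}^d\|F_jx\|_X^r\big)^{1/r}\leqslant\|x\|_X$. Taking the supremum over all finite pairwise disjoint families $\{F_j\}_{j=1}^d\subseteq\sch$ yields $\|x\|\SB{X}{r}\leqslant\|x\|_X$, which together with the first inequality finishes the proof. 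I do not expect a genuine obstacle here; the only points that need a little care are the reduction to $F_j\subseteq E$ (so that the pieces $F_jx$ are disjointly supported in $X$ and the lower $\ell_q$ estimate is applicable), the use of $1$-unconditionality for the coordinate projection, and the correct direction of the $\ell_r$-versus-$\ell_q$ comparison, which is precisely where the assumption $r\geqslant q$ enters.
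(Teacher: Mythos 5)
Your proof is correct and follows essentially the same route as the paper: the trivial lower bound from the single admissible family $\{E\}$, and for the converse the chain $\big(\sum_j\|F_jx\|_X^r\big)^{1/r}\leqslant\big(\sum_j\|F_jx\|_X^q\big)^{1/q}\leqslant\|x\|_X$ using $r\geqslant q$, the iterated lower $\ell_q$ estimate, and $1$-unconditionality. You merely make explicit two steps the paper leaves implicit (the reduction to $F_j\subseteq E$ and the projection bound via $1$-unconditionality), which is fine.
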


\begin{proof}
It is clear by the definition of the norm on $SB\SB{X}{r}$ that
$\big\|\sum_{i\in E}a_ie_i\big\|\SB{X}{r} \geqslant
\big\|\sum_{i\in E}a_ie_i\big\|_X$. Therefore it is sufficient to
show the inverse inequality. For some $\{F_j\}_{j=1}^d\subset\sch$
and by the lower $\ell_q$ estimate of the norm on $X$:

\begin{equation*}
\big\|\sum_{i\in E}a_ie_i\big\|\SB{X}{r} =
\Big(\sum_{j=1}^d\big\|\sum_{i\in
F_j}a_ie_i\big\|^r_X\Big)^\frac{1}{r} \leqslant
\Big(\sum_{j=1}^d\big\|\sum_{i\in
F_j}a_ie_i\big\|^q_X\Big)^\frac{1}{q} \leqslant \big\|\sum_{i\in
E}a_ie_i\big\|_X
\end{equation*}

\end{proof}

\begin{cor} Let $X$ be a Banach space with a
1-unconditional and spreading basis $\{e_n\}_{n\inn}$, with a norm
$\|\cdot\|_X$, which satisfies a lower $\ell_q$ estimate, for some
$1\leqslant q<\infty$. Let $r\geqslant q$. Then the basis of
$SB\SB{X}{r}$ generates the basis of $X$ as a spreading model.
\label{12}
\end{cor}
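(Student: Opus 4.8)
The plan is to exhibit an explicit sequence in $SB\SB{X}{r}$ that generates $\{e_n\}_{n\inn}$ as a spreading model, using the basis vectors themselves but shifted far to the right so that the Schreier condition never obstructs us. Concretely, I would take $x_n = e_n$ for $n\inn$, or rather consider, for a fixed number of terms, the vectors $e_{k_1},\ldots,e_{k_n}$ with $n\leqslant k_1<\ldots<k_n$; the point is that the set $\{k_1,\ldots,k_n\}$ then satisfies $\min\{k_1,\ldots,k_n\}=k_1\geqslant n=|\{k_1,\ldots,k_n\}|$, so $\{k_1,\ldots,k_n\}\in\sch$. By Proposition \ref{11} (applicable since the hypotheses give a lower $\ell_q$ estimate and $r\geqslant q$), for any scalars $\{a_i\}_{i=1}^n\subset[-1,1]$ we get the exact equality
\begin{equation*}
\Big\|\sum_{i=1}^n a_i e_{k_i}\Big\|\SB{X}{r} = \Big\|\sum_{i=1}^n a_i e_{k_i}\Big\|_X.
\end{equation*}
Then, because the basis $\{e_n\}_{n\inn}$ of $X$ is spreading, $\|\sum_{i=1}^n a_i e_{k_i}\|_X = \|\sum_{i=1}^n a_i e_i\|_X$. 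Combining the two displays gives
\begin{equation*}
\Big|\,\big\|\textstyle\sum_{i=1}^n a_i e_{k_i}\big\|\SB{X}{r} - \big\|\textstyle\sum_{i=1}^n a_i e_i\big\|_X\,\Big| = 0 < \de_n
\end{equation*}
for any null sequence $\de_n\searrow 0$, which is precisely the definition of $\{e_n\}_{n\inn}$ (as a sequence in $X$ with its own norm) being generated as a spreading model by $\{e_n\}_{n\inn}\subset SB\SB{X}{r}$.

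The only things left to check are bookkeeping: that $\{e_n\}_{n\inn}$ is seminormalized in $SB\SB{X}{r}$ — indeed $\|e_n\|\SB{X}{r} = \|e_n\|_X$ is constant by Proposition \ref{11} applied to the singleton $\{n\}$ (a fortiori in $\sch$), assuming as usual that the basis of $X$ is normalized — and that the limiting sequence $\{e_n\}_{n\inn}$, regarded abstractly, generates the space $X$. Since the spreading model of $\{e_n\}_{n\inn}\subset X$ under its own norm is $\{e_n\}_{n\inn}$ itself (a spreading sequence is trivially its own spreading model, with $\de_n\equiv 0$), this is immediate, and the identity map witnesses the equivalence.

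I do not anticipate a genuine obstacle here: the corollary is essentially a direct packaging of Proposition \ref{11} together with the spreading property of the basis of $X$, and the role of the Schreier family is entirely benign once the supporting coordinates are taken from an initial-segment-admissible set, which the spreading-model definition hands us for free via the condition $n\leqslant k_1$. The only point requiring the smallest care is making sure that in applying Proposition \ref{11} we verify $\{k_1,\ldots,k_n\}\in\sch$, i.e. $k_1\geqslant n$; this follows verbatim from $n\leqslant k_1$ in the definition of generating a spreading model.
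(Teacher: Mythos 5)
Your proof is correct and is exactly the argument the paper intends: the condition $n\leqslant k_1<\ldots<k_n$ in the definition of a spreading model makes $\{k_1,\ldots,k_n\}$ Schreier-admissible, so Proposition \ref{11} gives equality of norms and the spreading property of the basis of $X$ finishes the job. The paper states this in one line as ``an immediate consequence of Proposition \ref{11} and the spreading property,'' and your write-up supplies the same reasoning in detail.
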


The proof is an immediate consequence of Proposition \ref{11} and
the spreading property of the basis of $X$.

\begin{prp} Let $X$ be a Banach space with a 1-unconditional
basis $\{e_n\}_{n\inn}$. Let $1\leqslant r<\infty$. Let
$\{x_n\}_{n\inn}$ be a normalized block sequence in $SB\SB{X}{r}$,
such that $\lim_{n\to\infty}\|x_n\|_\infty = 0$. Then, by passing
to an appropriate subsequence, $\{x_n\}_{n\inn}$ is equivalent to
the basis of $\ell_r$. \label{13}
\end{prp}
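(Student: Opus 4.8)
The plan is to show that a normalized block sequence $\{x_n\}_{n\inn}$ in $SB\SB{X}{r}$ with $\|x_n\|_\infty\to 0$ has a subsequence $C$-equivalent to the $\ell_r$-basis, by establishing the upper and lower $\ell_r$ estimates separately. The lower estimate is the easy half: since the basis of $SB\SB{X}{r}$ is $1$-unconditional and the norm satisfies a lower $\ell_r$ estimate on disjointly supported vectors (note $\|\cdot\|\SB{X}{r}$ always dominates an $\ell_r$-sum over disjoint Schreier sets, regardless of $X$, simply because one may test against a disjoint union of the witnessing families for the individual blocks), we get $\|\sum_{n}a_nx_n\|\SB{X}{r}\geqslant(\sum_n|a_n|^r)^{1/r}$ for any finitely supported scalars and any block sequence at all; no passing to a subsequence is needed here.

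For the upper estimate I would exploit that $\|x_n\|_\infty\to 0$ to make each $x_n$ ``almost flat'', so that any single Schreier set $F$ sees only a small piece of it. Concretely, fix $\e_n\searrow 0$; after passing to a subsequence I may assume $\|x_n\|_\infty<\e_n$ and, by a standard gliding-hump/stabilization argument on the successive supports, that the minimum of $\supp x_n$ grows as fast as I like relative to $n$ and to $1/\e_n$. Now estimate $\|\sum_{n\in A}a_nx_n\|\SB{X}{r}$ for a finite $A$: take a witnessing disjoint family $\{F_j\}_{j=1}^d\subset\sch$. Each $F_j$ has $|F_j|\leqslant\min F_j$, so if $F_j$ meets $\supp x_n$ then $|F_j\cap\supp x_n|\leqslant\min F_j\leqslant$ (first index of $x_n$ if $n$ is the least such index), which together with $\|x_n\|_\infty<\e_n$ and the $1$-unconditionality of the basis of $X$ forces $\|F_j(a_nx_n)\|_X$ to be small unless $F_j$ is essentially ``dedicated'' to a single $x_n$. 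Splitting each $F_j$ according to which $x_n$'s it meets and using the triangle inequality in $\ell^r$ together with the growth of the supports, I would bound $\sum_j\|F_j(\sum_{n\in A}a_nx_n)\|_X^r$ by $(1+\de)\sum_{n\in A}\|a_nx_n\|\SB{X}{r}^r=(1+\de)\sum_{n\in A}|a_n|^r$, where $\de$ comes from the tail of $\{\e_n\}$ and can be absorbed into the equivalence constant.

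The main obstacle is controlling the ``overlap'' terms in that splitting: a single large Schreier set $F_j$ (with $\min F_j$ large) can straddle many consecutive blocks $x_n$, and one must check that the contribution of each straddled block is genuinely negligible, not merely that there are few of them. This is exactly where $\|x_n\|_\infty\to 0$ is used in an essential way, in combination with $|F_j|\leqslant\min F_j$: the number of coordinates of $x_n$ inside $F_j$ is at most $|F_j|$, but if $F_j$ reaches down to index $\min F_j$, and $\supp x_n$ starts far above $\min F_j$, then $F_j$ cannot be long enough to cover much of $x_n$ once the supports are spread out sufficiently. Making this quantitative — choosing the lacunarity of the supports and the rate $\e_n\searrow 0$ so that the cumulative error over all $j$ and all straddled $n$ is summably small — is the technical heart of the argument; everything else is bookkeeping with the $1$-unconditionality of $X$ and Minkowski's inequality in $\ell^r$.
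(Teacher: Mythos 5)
Your proposal is correct and follows essentially the same route as the paper, which for this proposition simply invokes the classical argument for the Baernstein space $SB\SB{\ell_1}{2}$: the lower $\ell_r$ estimate is free from taking disjoint unions of witnessing Schreier families, and the upper estimate comes from grouping the sets $F_j$ by the first block they meet, using $|F_j|\leqslant \min F_j$ together with $\|x_n\|_\infty\to 0$ along a suitably chosen subsequence so that the straddling contributions are summably small. The quantitative step you defer closes in the standard way (at most $\max\supp x_{n-1}$ pairwise disjoint $F_j$ can reach into $x_n$ from earlier blocks, each contributing at most $\max\supp x_{n-1}\cdot\|x_n\|_\infty\cdot|a_n|$ to the $X$-norm, after which Minkowski's inequality in $\ell_r$ separates the diagonal and off-diagonal parts).
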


The proof is the same as for the classical Screier-Baernstein
space $SB\SB{\ell_1}{2}$, where such a sequence has a further
subsequence which is equivalent to the basis of $\ell_2$. It also
follows that the space $SB\SB{X}{r}$ is $\ell_r$ saturated. If the
norm on $X$ satisfies a lower $\ell_q$ estimate and $r>q$, then
the space $X$ cannot contain $\ell_r$. Thus in this case the
spaces $X$ and $SB\SB{X}{r}$ are totally incomparable.

\section{Spreading Models of Banach Spaces with a Symmetric Basis}
In this section we study  the structure of the spreading models in
Banach spaces with a 1-symmetric basis. We start with the
following, which is critical for our proofs. This result and the
next Proposition are traced back to \cite{m} and are closely
related to Lemma IV.2.A.3 from \cite{f}.

\begin{prp} Let $X$ be a Banach space with a 1-symmetric and boundedly complete basis. Let $\{x_n\}_{n\inn}$ be a normalized block sequence
in $X$ and assume that there is some $\e>0$, such that
$\|x_n\|_{\infty} > \e$, for all $n\inn$. Then by passing to an
appropriate subsequence, there exist $\{y_n\}_{n\inn},
\{z_n\}_{n\inn}$ block sequences in $X$ and $\{u_n\}_{n\inn}$ a
disjointly supported 1-symmetric sequence in $X$ with the
following properties:
\begin{enumerate}

\item[i.] $x_n = y_n + z_n$, for all $n\inn$ and $\supp
y_n\cap\supp z_m = \varnothing$, for all $n,m\inn$.

\item[ii.] $\lim_{n\to\infty}\|z_n\|_\infty = 0$.

\item[iii.] $\{y_n\}_{n\inn}$ isometrically generates
$\{u_n\}_{n\inn}$ as a spreading model.

\item[iv.] $\|u_n\|_\infty = \|u_1\|_\infty > 0$, for all $n\inn$.

\end{enumerate}\label{16}
\end{prp}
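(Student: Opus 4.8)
The plan is to extract, from a normalized block sequence with $\|x_n\|_\infty > \e$, the "large coordinate" of each vector and organize the resulting pieces into a spreading model. First I would, by passing to a subsequence, arrange that for each $n$ there is a single coordinate $k_n \in \supp x_n$ with $|x_n(k_n)| \geq \e$, and that the positions $k_n$ are strictly increasing; replacing $x_n$ by $-x_n$ if necessary (which is harmless since the basis is $1$-unconditional) we may assume $x_n(k_n) \geq \e$. Since the basis is $1$-symmetric, the scalars $x_n(k_n)$ lie in a bounded interval $[\e, 1]$, so by a further subsequence I may assume $x_n(k_n) \to c$ for some $c \in [\e, 1]$. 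The idea is to let $y_n$ carry a block of coordinates around $k_n$ contributing a fixed amount of "mass" and let $z_n$ carry the rest; but the cleanest choice is simply $y_n = x_n(k_n)\,e_{k_n}$ and $z_n = x_n - y_n$, which immediately gives (i) with $\supp y_n = \{k_n\}$ disjoint from every $\supp z_m$, and gives $\|z_n\|_\infty = \max_{i \neq k_n}|x_n(i)|$.

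The first genuine issue is property (ii): I need $\|z_n\|_\infty \to 0$, i.e. that the \emph{second-largest} coordinate of $x_n$ also tends to $0$. This need not hold for the original sequence, so here I would iterate: if along a subsequence the second-largest coordinates stay bounded below by some $\e_2 > 0$, peel that coordinate off too into $y_n$; repeating, either the process terminates after finitely many steps with a uniform bound on the number of "large" coordinates — in which case each $x_n$ is, up to a vanishing error, a fixed-length $1$-symmetric block, and I take $y_n$ to be that block and $z_n$ the vanishing remainder — or I must argue the process cannot continue indefinitely. The latter is exactly where bounded completeness of the basis enters: if arbitrarily many coordinates of magnitude $\geq \e' > 0$ appeared, then by $1$-symmetry and a diagonal/gliding-hump argument one produces a disjointly supported sequence $\sum \e' e_{j}$ of arbitrary length with bounded norm, contradicting that the basis is boundedly complete (the partial sums $\sum_{j=1}^N \e' e_{j_\ell}$ would be bounded but non-convergent). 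Thus after finitely many peeling steps I obtain $y_n$ a $1$-symmetric block of some fixed length $d$ (with the peeled-off scalars converging, after a subsequence, to fixed values $c_1 \geq \cdots \geq c_d > 0$, the largest being $\geq \e$) and $z_n = x_n - y_n$ with $\|z_n\|_\infty \to 0$, and with $\supp y_n \cap \supp z_m = \varnothing$ for all $n,m$ after a final refinement of supports.

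For property (iii), I would use a standard spreading-model stabilization: by Brunel--Sucheston, pass to a subsequence of $\{y_n\}$ generating \emph{some} spreading model $\{u_n\}$; because each $y_n$ is a disjointly supported $1$-symmetric block of fixed length $d$ with the \emph{same} coefficient pattern $(c_1,\dots,c_d)$ up to a vanishing perturbation, and the ambient basis is $1$-symmetric, the norm $\|\sum_i a_i y_{k_i}\|$ depends only on the multiset of values $\{a_i c_j : 1\le i\le n,\ 1\le j\le d\}$ and hence equals $\|\sum_i a_i \bar y_i\|$ \emph{exactly}, where $\bar y_i$ are genuinely disjoint $1$-symmetric blocks with pattern $(c_1,\dots,c_d)$; therefore the convergence to the spreading model is isometric and $\{u_n\}$ is itself realized as a disjointly supported $1$-symmetric sequence, namely $u_n = \sum_{j=1}^d c_j f_{(n-1)d+j}$ for the symmetric basis $\{f_m\}$ of the generated space. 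Property (iv) is then automatic: $\|u_n\|_\infty = \max_j c_j = c_1 \geq \e > 0$, independent of $n$. The main obstacle, as indicated, is the peeling/termination argument for (ii); everything else is bookkeeping with $1$-symmetry and subsequences. I would present the peeling as a finite induction on "number of coordinates above threshold $\e/2^\ell$", using bounded completeness to cap the induction, and only then invoke Brunel--Sucheston once, on the stabilized sequence.
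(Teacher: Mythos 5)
There is a genuine gap at the termination step of your peeling argument, and it sits exactly where the main difficulty of the proposition lies. Bounded completeness gives you, for each \emph{fixed} threshold $\delta>0$, a uniform bound $m(\delta)$ on the number of coordinates of a normalized vector exceeding $\delta$ (this is precisely the first observation in the paper's proof). It does \emph{not} cap the total number of coordinates you must peel: as the threshold drops to $\e/2^{\ell}$ the bound $m(\e/2^{\ell})$ may tend to infinity, so your dichotomy ``either the process terminates after finitely many steps with a uniform bound \dots or arbitrarily many coordinates of magnitude $\geqslant\e'$ appear'' is false as stated --- the process can continue through every threshold without ever producing arbitrarily many coordinates above any \emph{single} one. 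Concretely, take $X=\ell_2$ and let $x_n$ be normalized blocks whose decreasing rearrangement is proportional to $(1,2^{-1},2^{-2},\dots,2^{-n})$: the coordinate count at each fixed threshold is bounded, yet for no fixed $d$ does the $(d+1)$-st largest coordinate tend to $0$, so no fixed-length $y_n$ can yield $\|z_n\|_\infty\to 0$. The limit profile of the decreasing rearrangements may therefore have \emph{infinite} support, and your description of $u_n$ as a block of fixed length $d$ with pattern $(c_1,\dots,c_d)$ --- together with the exact-isometry argument for (iii) that leans on that fixed finite pattern --- does not cover the general case.

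The paper's proof handles this by letting $\supp y_k$ grow with $k$: it puts into $y_k$ \emph{all} coordinates of $x_{n_k}$ of size at least $\delta_k$, with $\delta_k\searrow 0$, so that (ii) is automatic; it uses bounded completeness a second time to ensure that the coordinatewise limit $x=\sum_i\lambda_ie_i$ of the decreasing rearrangements actually converges in $X$; it defines $u_k$ as disjoint copies of this possibly infinitely supported vector; and it proves a uniform smallness claim, $\lim_{\delta\to 0}\sup_k\|R^{\delta}y_k\|=0$, to control the low tails of the $y_k$ and push the isometric spreading-model computation through. To repair your argument you would need to replace the fixed-length peeling by this growing-support decomposition and supply the tail-control claim; the remaining ingredients of your outline (subsequence extraction, use of $1$-symmetry, and the verification of (iv)) are consistent with the paper's approach.
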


\begin{proof}
Since the basis of $X$ is boundedly complete and symmetric, for
every $\de>0$, there exists $m(\de)\inn$, such that for every
$x\in X$ with $\|x\| = 1, \#\{i: |x(i)| \geqslant \de\} \leqslant
m(\de)$. Otherwise the basis of $X$ would be equivalent to the
basis of $c_0$.

Since $X$ has a 1-symmetric basis, we may assume that
$x_n(i)\geqslant 0$, for all $n,i\inn$ and the non zero entries of
each $x_n$ are in decreasing order.

For each $x_n$, we set $G_n = \supp x_n$. Let $G_n = \{i_1,\ldots
i_{d_n}\}$, then we set:

\begin{equation*}
\tilde{x}_n(\ell) = \left\{
\begin{array}{lcl}
x_n(i_\ell) &\text{if}&\;\ell\leqslant d_n\\
0 &\text{otherwise}&
\end{array}\right.
\end{equation*}

By passing, if it is necessary, to a subsequence, we may assume
that $\lim_{n\to\infty}\tilde{x}_n(i) = \lambda_i$, for each
$i\inn$. Since the basis of $X$ is boundedly complete, we conclude
that $x = \sum_{i=1}^\infty\lambda_ie_i\in X$.

Choose $\{\de_k\}_{k\inn}, \{\e_k\}_{k\inn}$ sequences of positive
reals, such that $\de_k,\e_k\searrow 0$.

Inductively we choose $n_1, n_2,\ldots,n_k,\ldots$, such that if
$k_0\inn$, then for every $k\geqslant k_0$ the following hold:

\begin{enumerate}

\item[a.] $\#\{i : \tilde{x}_{n_k}(i) \geqslant\de_{k_0}\} =
m_{k_0}$

\item[b.] $\|P_{[1,m_{k_0}]}(\tilde{x}_{n_k} - x)\| < \e_k$

\end{enumerate}

Define $\{y_k\}_{k\inn}$ as follows:

\begin{equation*}
y_k(i) = \left\{
\begin{array}{lcl}
x_{n_k}(i) &\text{if}\; x_{n_k}(i)\geqslant\de_k\\
0 &\text{otherwise}&
\end{array}\right.
\end{equation*}
and $z_k = x_{n_k} - y_k$. Properties i. and ii. are obviously
satisfied. Also observe that if $k_0\inn$, then for every
$k\geqslant k_0,\; P_{[1,m_{k_0}]}(\tilde{x}_{n_k}) =
P_{[1,m_{k_0}]}(\tilde{y}_k)$, where $\tilde{y}_k$ is defined in
the obvious way.

For $\de > 0, y\in X$ define $R^\de y\in X$ as follows:

\begin{equation*}
R^\de y(i) = \left\{
\begin{array}{lcl}
y(i) &\text{if}\; |y(i)|<\de\\
0 &\text{otherwise}&
\end{array}\right.
\end{equation*}

\begin{clm} $\lim_{\de\to 0}\sup\{\|R^\de y_k\|:
k\inn\} = 0$
\end{clm}

Let $\e>0$. Choose $k_0\inn$, such that
$\|\sum_{i=m_{k_0}}^\infty\lambda_ie_i\| < \frac{\e}{2}$ (if the
sequence $\{m_k\}_{k\inn}$ is bounded, then the same is true for
the support of $x$) and $\e_k < \frac{\e}{2}$, for all $k\geqslant
k_0$. Then for every $\de<\de_{k_0}$, by the definition of the
$y_k$, property b. and the fact that for $k\geqslant k_0,\;
P_{[1,m_{k_0}]}(\tilde{x}_{n_k}) = P_{[1,m_{k_0}]}(\tilde{y}_k)$
for every $k\geqslant k_0$, it follows that $\sup\{\|R^\de y_k\|:
k\inn\} < \e$, thus we have proved the claim.

Choose $\{N_k\}_{k\inn}\subset [\mathbb{N}]^\infty$ a partition of
the natural into infinite sets and set $u_k =
\sum_{i=1}^\infty\lambda_ie_{N_k(i)}$. It is immediate that
$\{u_k\}_{k\inn}$ is 1-symmetric and $\|u_k\|_\infty =
\|u_1\|_\infty > 0$, for all $k\inn$. We will prove that any
spreading model generated by $\{y_k\}_{k\inn}$ is isometric to
$\{u_k\}_{k\inn}$ by showing that if $\ell\inn,
\{a_i\}_{i=1}^\ell\subset [-1,1],$ then $
\lim_{j\to\infty}\|\sum_{i=1}^\ell a_iy_{i+j}\| =
\|\sum_{i=1}^\ell a_iu_i\|$.

Let $\e>0$. There exists $n_0\inn$, such that

\begin{equation}
\big|\|\sum_{i=1}^\ell a_iP_{N_i(n)}u_i\| - \|\sum_{i=1}^\ell
a_iu_i\|\big| < \frac{\e}{3},\quad \text{for all}\; n\geqslant n_0
\label{eqx2}
\end{equation}

By the claim, there exists $k\inn$ such that

\begin{equation}
\|\sum_{i=1}^\ell a_iR^{\de_k}y_{i+j}\| < \frac{\e}{3},\quad
\text{for all}\; j\inn\; \label{eqx3}
\end{equation}
and such that $m_k \geqslant n_0$.

Choose $j_0\inn$ such that $\#\supp \big\{y_{i+j} -
R^{\de_k}y_{i+j}\big\} = m_k$ for all $j\geqslant j_0$.

By property b. and the symmetricity of the base of $X$, there
exists $j_1\geqslant j_0$, such that

\begin{equation}
\big|\|\sum_{i=1}^\ell a_i(y_{i+j} - R^{\de_k}y_{i+j})\| -
\|\sum_{i=1}^\ell a_iP_{N_i(M_k)}u_i\|\big| < \frac{\e}{3},\quad
\text{for all}\; j\geqslant j_1.\label{eqx4}
\end{equation}

By combining relations \eqref{eqx2}, \eqref{eqx3} and
\eqref{eqx4}, it follows that $\big|\|\sum_{i=1}^\ell a_iy_{i+j}\|
- \|\sum_{i=1}^\ell a_iu_i\|\big| < \e$, for all $j\geqslant j_1$,
thus we have proved the proposition.

\end{proof}

The following is the corresponding result of Proposition \ref{16}
in the setting of Schreier-Baernstein spaces.

\begin{prp} Let $X$ be a Banach space with a
1-symmetric basis $\{e_n\}_{n\inn}$, with a norm which satisfies a
lower $\ell_q$ estimate, for some $1\leqslant q<\infty$. Let
$r\geqslant q$. Let $\{x_n\}_{n\inn}$ be a normalized block
sequence in $SB\SB{X}{r}$ and assume that there is some $\e>0$,
such that $\|x_n\|_{\infty}
> \e$, for all $n\inn$. Then by passing to an appropriate
subsequence, there exist $\{y_n\}_{n\inn}, \{z_n\}_{n\inn}$ block
sequences in $SB\SB{X}{r}$ and $\{u_n\}_{n\inn}$ a disjointly
supported 1-symmetric sequence in $X$ with the following
properties:
\begin{enumerate}

\item[i.] $x_n = y_n + z_n$, for all $n\inn$ and $\supp
y_n\cap\supp z_m = \varnothing$, for all $n,m\inn$.

\item[ii.] $\lim_{n\to\infty}\|z_n\|_\infty = 0$.

\item[iii.] $\{y_n\}_{n\inn}$ isometrically generates
$\{u_n\}_{n\inn}$ as a spreading model.

\item[iv.] $\|u_n\|_\infty = \|u_1\|_\infty > 0$, for all $n\inn$.

\end{enumerate}\label{14}
\end{prp}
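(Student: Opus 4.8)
The plan is to transport the proof of Proposition~\ref{16} almost verbatim into the Schreier--Baernstein setting, with Proposition~\ref{11} supplying the one new ingredient: whenever a vector of $SB\SB{X}{r}$, or a sufficiently spread-out finite sum of such, is supported on a member of $\sch$, its $\|\cdot\|\SB{X}{r}$-norm equals its $X$-norm, so all the symmetric-basis arguments of Proposition~\ref{16} may be run inside $X$. Two preliminary remarks are needed. First, since the $1$-unconditional basis of $X$ satisfies a lower $\ell_q$ estimate with $q<\infty$, it admits no normalized block sequence equivalent to the $c_0$-basis, hence is boundedly complete. Second, if $\{x_n\}_{n\inn}$ is a normalized block sequence of $SB\SB{X}{r}$ and $\de>0$, then for $n$ with $\min\supp x_n$ large enough every $A\subseteq\{i:|x_n(i)|\geqslant\de\}$ with $|A|\leqslant\min\supp x_n$ lies in $\sch$, so by $1$-unconditionality, Proposition~\ref{11}, and the lower $\ell_q$ estimate, $1=\|x_n\|\SB{X}{r}\geqslant\|x_n|_A\|\SB{X}{r}=\|x_n|_A\|_X\geqslant\de|A|^{1/q}$; hence there is $m(\de)\inn$ with $\#\{i:|x_n(i)|\geqslant\de\}\leqslant m(\de)$ for all large $n$.

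Now let $\{x_n\}_{n\inn}$ be as in the statement; by $1$-unconditionality assume $x_n(i)\geqslant 0$, and fix $\de_k\searrow 0$ with $\de_1<\e$. Passing to a subsequence, arrange $\min\supp x_n\geqslant m(\de_n)$ for every $n$, so that $y_n$, the restriction of $x_n$ to $\{i:x_n(i)\geqslant\de_n\}$, is supported on some $G_n\in\sch$; then $\|y_n\|\SB{X}{r}=\|y_n\|_X\leqslant 1$ by Proposition~\ref{11} and $1$-unconditionality, and the same identity holds for every restriction of $y_n$ to a subset of $G_n$. Put $z_n=x_n-y_n$; properties~i.\ and~ii.\ are immediate. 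By a diagonal argument over the thresholds $\de_{k_0}$, pass to a subsequence stabilizing $\#\{i:x_n(i)\geqslant\de_{k_0}\}=m_{k_0}$ for $n\geqslant k_0$, and, refining once more, arrange that the decreasing rearrangements $\tilde y_n\in X$ converge coordinatewise, say to $\lambda_i$, rapidly enough that $\|P_{[1,m_{k_0}]}(\tilde y_n-x)\|_X<\e_n$ for $n\geqslant k_0$, where $\e_n\searrow 0$ and $x:=\sum_i\lambda_ie_i$. Since $\|\tilde y_n\|_X=\|y_n\|_X\leqslant 1$, bounded completeness of the basis of $X$ gives $x\in X$, and $\lambda_1=\lim_n\|y_n\|_\infty=\lim_n\|x_n\|_\infty\geqslant\e$. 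Choosing a partition $\{N_k\}_{k\inn}$ of $\mathbb{N}$ into infinite sets and setting $u_k=\sum_i\lambda_ie_{N_k(i)}$ produces a disjointly supported $1$-symmetric sequence in $X$ with $\|u_k\|_\infty=\lambda_1=\|u_1\|_\infty>0$, which is property~iv. Finally, exactly as in Proposition~\ref{16}, one establishes the claim $\lim_{\de\to 0}\sup_k\|R^\de y_k\|_X=0$: each $R^\de y_k$ is a restriction of $y_k$ to a subset of $G_k$, so its $\|\cdot\|\SB{X}{r}$- and $X$-norms coincide, and the argument runs inside $X$ word for word.

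There remains property~iii. Since the $u_k$ are $1$-symmetric in $X$, it suffices to prove $\|\sum_{i=1}^\ell a_iy_{i+j}\|\SB{X}{r}\to\|\sum_{i=1}^\ell a_iu_i\|_X$ as $j\to\infty$, for each $\ell\inn$ and $\{a_i\}_{i=1}^\ell\subset[-1,1]$ (uniformity in the coefficients, hence a single null sequence of errors, follows by compactness of $[-1,1]^\ell$ and equicontinuity). I would run the three-$\e$ scheme of Proposition~\ref{16}. Given $\e>0$, choose $k$ so large that $\sup_{k'}\|R^{\de_k}y_{k'}\|_X<\e/(3\ell)$ and that $\big|\,\|\sum_{i=1}^\ell a_iv_i\|_X-\|\sum_{i=1}^\ell a_iu_i\|_X\big|<\e/3$ for any disjointly supported copies $v_1,\dots,v_\ell$ of $P_{[1,m_k]}x$ (possible since $P_{[1,m_k]}x\to x$ in $X$ and $\|\cdot\|_X$ is $1$-symmetric), and write $y_{i+j}=h_{i+j}+R^{\de_k}y_{i+j}$ with $h_{i+j}:=y_{i+j}-R^{\de_k}y_{i+j}$. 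For $i+j\geqslant k$ the head $h_{i+j}$ is supported on a set of the fixed cardinality $m_k$ whose decreasing rearrangement is $P_{[1,m_k]}\tilde y_{i+j}$, hence within $\e_{i+j}$ of $P_{[1,m_k]}x$. The crucial point is that for $j$ large the sets $\supp h_{i+j}$ ($1\leqslant i\leqslant\ell$) are pairwise disjoint with $\min\supp h_{1+j}\geqslant\ell m_k=\big|\bigcup_{i=1}^\ell\supp h_{i+j}\big|$, so $\bigcup_{i=1}^\ell\supp h_{i+j}\in\sch$, and Proposition~\ref{11} gives $\|\sum_{i=1}^\ell a_ih_{i+j}\|\SB{X}{r}=\|\sum_{i=1}^\ell a_ih_{i+j}\|_X$; by the $1$-symmetry of $X$ and the closeness of the $\tilde h_{i+j}$ to $P_{[1,m_k]}x$ this is, for $j$ large, within $\e/3$ of $\|\sum_{i=1}^\ell a_iv_i\|_X$ for suitable disjoint copies $v_i$ of $P_{[1,m_k]}x$. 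Moreover the $R^{\de_k}y_{i+j}$ are disjointly supported restrictions of the $y_{i+j}$ to subsets of Schreier sets, so $\|\sum_{i=1}^\ell a_iR^{\de_k}y_{i+j}\|\SB{X}{r}\leqslant\sum_{i=1}^\ell|a_i|\,\|R^{\de_k}y_{i+j}\|_X\leqslant\ell\sup_{k'}\|R^{\de_k}y_{k'}\|_X<\e/3$. Combining the three estimates yields $\big|\,\|\sum_{i=1}^\ell a_iy_{i+j}\|\SB{X}{r}-\|\sum_{i=1}^\ell a_iu_i\|_X\big|<\e$ for all large $j$, which is property~iii.

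The only point needing genuine care beyond Proposition~\ref{16} is exactly this passage from $\|\cdot\|\SB{X}{r}$ to $\|\cdot\|_X$: one must pass to a subsequence along which every coordinate set entering the argument — the supports $G_n$ of the $y_n$, the head supports $\supp h_n$, the tail supports $\supp R^\de y_k$, and finite spread-out unions of head supports — lies in $\sch$, so that Proposition~\ref{11} is applicable; with that bookkeeping in place, the remainder of the argument is the proof of Proposition~\ref{16}.
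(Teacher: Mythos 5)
Your proposal is correct and follows exactly the route the paper intends: the paper gives no details for Proposition~\ref{14} beyond the remark that the proof is ``similar to the one of Proposition~\ref{16} and uses Proposition~\ref{11}'', and your write-up is precisely that adaptation, with the right observations (the lower $\ell_q$ estimate yields bounded completeness and the bound $m(\de)\leqslant\de^{-q}$, and after passing to a subsequence every support entering the argument --- the $G_n$, the tails $R^{\de}y_k$, and the spread-out unions of heads --- lies in $\sch$ so that Proposition~\ref{11} converts $\|\cdot\|\SB{X}{r}$ into $\|\cdot\|_X$). No gaps beyond those already present in the paper's own proof of Proposition~\ref{16}.
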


The proof of this Proposition is similar to the one of Proposition
\ref{16} and uses Proposition \ref{11}.

\begin{cor}Let $X$ be a  Banach space with a
1-symmetric basis $\{e_n\}_{n\inn}$, with a norm which satisfies a
lower $\ell_q$ estimate for some $1\leqslant q<\infty$. Let
$r\geqslant q$. Let $\{x_n\}_{n\inn}$ be a normalized block
sequence in $SB\SB{X}{r}$ and assume that there is some $\e>0$,
such that $\|x_n\|_{\infty}
> \e$, for all $n\inn$. Then by passing to an appropriate
subsequence, there exists $\{u_n\}_{n\inn}$ a disjointly supported
1-symmetric sequence in $X$ such that:

\begin{enumerate}

\item[i.] $\{x_n\}_{n\inn}$ isomorphically generates
$\{u_n\}_{n\inn}$ as a spreading model.

\item[ii.] $\|u_n\|_\infty = \|u_1\|_\infty > 0$, for all $n\inn$.

\end{enumerate}\label{15}
\end{cor}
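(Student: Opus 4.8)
The plan is to derive Corollary \ref{15} directly from Proposition \ref{14} together with Proposition \ref{13}. First I would apply Proposition \ref{14} to the given normalized block sequence $\{x_n\}_{n\inn}$ in $SB\SB{X}{r}$; after passing to a subsequence this produces block sequences $\{y_n\}_{n\inn}$, $\{z_n\}_{n\inn}$ with $x_n = y_n + z_n$, disjointly supported in the sense of (i), with $\|z_n\|_\infty \to 0$, and a disjointly supported $1$-symmetric sequence $\{u_n\}_{n\inn}$ in $X$ which is isometrically generated by $\{y_n\}_{n\inn}$, and with $\|u_n\|_\infty = \|u_1\|_\infty > 0$. Property (ii) of the Corollary is then immediate, so the whole content is in showing that $\{x_n\}_{n\inn}$ \emph{isomorphically} generates $\{u_n\}_{n\inn}$.

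The key step is to control the contribution of the $z_n$'s. Since $\|z_n\|_\infty \to 0$ and $\{z_n\}_{n\inn}$ is a normalized (or seminormalized) block sequence in $SB\SB{X}{r}$, Proposition \ref{13} lets us pass to a further subsequence along which $\{z_n\}_{n\inn}$ is equivalent to the $\ell_r$ basis (if $\{z_n\}$ does not stay seminormalized, i.e. $\|z_n\| \to 0$, the argument is even easier since then $\{x_n\}$ and $\{y_n\}$ are equivalent and generate the same spreading model up to renorming). Thus there are constants $0 < c \leqslant C$ with $c\big(\sum |a_i|^r\big)^{1/r} \leqslant \|\sum a_i z_{k_i}\| \leqslant C\big(\sum |a_i|^r\big)^{1/r}$ for admissible choices of indices and scalars. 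On the other hand, because the norm of $X$ satisfies a lower $\ell_q$ estimate with $q \leqslant r$ and $\{u_n\}$ is disjointly supported, the spreading sequence $\{u_n\}$ dominates the $\ell_r$ basis: $\|\sum a_i u_i\|_X \geqslant \|\sum a_i u_i\|_q$-type estimate gives $\|\sum a_i u_i\| \geqslant \big(\sum |a_i|^r\big)^{1/r}$ when $r = q$, and more generally one uses that a $1$-symmetric lower-$\ell_q$ sequence dominates $\ell_r$ for $r \geqslant q$ only after checking; the safe route is to observe the spreading model $\{u_n\}$ of $\{y_n\}$ in $SB\SB{X}{r}$ already satisfies a lower $\ell_r$ estimate coming from the outer $\ell_r$-sum structure of the Schreier–Baernstein norm, so $\|\sum a_i u_i\| \geqslant \big(\sum|a_i|^r\big)^{1/r}$.

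With these pieces in hand I would fix $\ell\inn$, scalars $\{a_i\}_{i=1}^\ell \subset [-1,1]$, and estimate $\|\sum_{i=1}^\ell a_i x_{i+j}\|$ for large $j$. Using property (i) (the $y$'s and $z$'s are cross-disjointly supported) and the triangle inequality in both directions, $\big|\,\|\sum a_i x_{i+j}\| - \|\sum a_i y_{i+j}\|\,\big| \leqslant \|\sum a_i z_{i+j}\| \leqslant C\big(\sum|a_i|^r\big)^{1/r} \leqslant C\,\|\sum a_i u_i\|$ by the lower $\ell_r$ estimate on $\{u_n\}$, and symmetrically one gets a lower bound; since $\|\sum a_i y_{i+j}\| \to \|\sum a_i u_i\|$ by property (iii), it follows that $\|\sum a_i x_{i+j}\|$ is, in the limit, equivalent to $\|\sum a_i u_i\|$ with constants $1-C', 1+C'$ independent of $\ell$ and the $a_i$'s. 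Hence the spreading model generated by (a subsequence of) $\{x_n\}_{n\inn}$ is equivalent to $\{u_n\}_{n\inn}$, which is the assertion. The main obstacle I anticipate is bookkeeping the disjointness in (i) correctly so that $z$-terms can genuinely be split off additively, and making sure the equivalence constants from Proposition \ref{13} do not depend on $\ell$ — but since $\{z_n\}$ is $\ell_r$-equivalent as a whole sequence, the constants are uniform, so this goes through.
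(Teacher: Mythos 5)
Your overall route is the paper's: apply Proposition \ref{14} to get $x_n = y_n + z_n$ with $\{u_n\}_{n\inn}$ the spreading model of $\{y_n\}_{n\inn}$, use Proposition \ref{13} to identify $\{z_n\}_{n\inn}$ with the $\ell_r$ basis, and absorb the $z$-contribution into $\|\sum a_iu_i\|_X$ via the lower $\ell_q$ estimate. For that last domination the clean statement is $\|\sum a_iu_i\|_X\geqslant\|u_1\|\big(\sum|a_i|^q\big)^{1/q}\geqslant\|u_1\|\big(\sum|a_i|^r\big)^{1/r}$, since the $u_i$ are disjointly supported, of equal norm, and $q\leqslant r$; you do not need the vaguer appeal to the ``outer $\ell_r$-sum structure'' of the Schreier--Baernstein norm, which as stated would require a separate argument. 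With this, your upper estimate $\limsup_j\|\sum a_ix_{i+j}\|\leqslant(1+C')\|\sum a_iu_i\|_X$ is fine.

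The gap is in your lower bound. You obtain it ``symmetrically'' from the reverse triangle inequality, which yields $\liminf_j\|\sum a_ix_{i+j}\|\geqslant(1-C')\|\sum a_iu_i\|_X$ with $C'=C/\|u_1\|$, where $C$ is the upper $\ell_r$-constant of $\{z_n\}_{n\inn}$. Nothing forces $C'<1$: the $z_n$ may carry most of the norm of $x_n$ while $\|u_1\|$ can be as small as order $\e$, so $1-C'$ can be nonpositive and the claimed equivalence collapses. This is precisely where property i.\ must actually be used rather than merely cited: since $\supp\big(\sum_i a_iy_{i+j}\big)\cap\supp\big(\sum_i a_iz_{i+j}\big)=\varnothing$ and the basis of $SB\SB{X}{r}$ is 1-unconditional, one has directly
\begin{equation*}
\Big\|\sum_{i=1}^\ell a_ix_{i+j}\Big\|\SB{X}{r}\geqslant\Big\|\sum_{i=1}^\ell a_iy_{i+j}\Big\|\SB{X}{r} \longrightarrow \Big\|\sum_{i=1}^\ell a_iu_i\Big\|_X \quad (j\to\infty),
\end{equation*}
which is the lower bound with constant $1$, exactly the left-hand inequality in the paper's two-sided estimate. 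With that one replacement your argument closes.
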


\begin{proof}
Apply Proposition \ref{14}, take the decomposition $x_n = y_n +
z_n$ and $\{u_n\}_{n\inn}$ the spreading model of
$\{y_n\}_{n\inn}$. By passing to a further subsequence, by virtue
of Proposition \ref{13} $\{z_n\}_{n\inn}$ is equivalent to the
basis of $\ell_r$. Then for $\ell\inn,
\{a_i\}_{i=1}^\ell\subset[-1,1]$, by standard arguments, keeping
in mind that the norm on $X$ satisfies a lower $\ell_q$ estimate
and the decomposition's properties, one can see that

\begin{equation*}
\|\sum_{i=1}^\ell a_iu_i\|_X \leqslant
\lim_{m\to\infty}\|\sum_{i=1}^\ell a_ix_{i+m}\|\SB{X}{r}\leqslant
C\|\sum_{i=1}^\ell a_iu_i\|_X
\end{equation*}
for some positive constant $C$.

\end{proof}

\section{The Main Result}

We start by stating some general facts about spreading models
admitted by a super reflexive Banach space $X$. As is well known,
the class of super reflexive Banach spaces, coincides with the one
of uniformly convex Banach spaces \cite{t}.

Suppose that $E$ is a Banach space, such that
$X\stackrel{k}{\rightarrow} E$, for some $k\inn$. Any space which
is finitely representable in $E$, is also finitely representable
in $X$, therefore $E$ must be super reflexive.

Thus any non trivial $k$-iterated spreading model
$\{e_n\}_{n\inn}$ of $X$ is weakly convergent. It follows that
$\{e_n\}_{n\inn}$ must either be 1-unconditional and weakly null,
or singular (see \cite{p} and Corollary 3.12 of \cite{g} or
\cite{f}).

Also if $\{e_n\}_{n\inn}$ is singular, then it is weakly
convergent to some element $e$ in $E = [\{e_n\}_{n\inn}]$ and if
we set $e^\prime_n = e_n - e$, then $\{e_n^\prime\}_{n\inn}$ is
1-unconditional, spreading and $E^\prime =
[\{e^\prime_n\}_{n\inn}]$ is isomorphic to $E$ (see Proposition
3.13 of \cite{g} or see \cite{f}).

\begin{lem} Let $\{e_n\}_{n\inn}$ be a singular and spreading
sequence. Then there exists a spreading and weakly null sequence
$\{d_n\}_{n\inn}$ with the following properties:

\begin{enumerate}

\item[i.] For every Banach space $X$ which admits
$\{e_n\}_{n\inn}$ as a spreading model, $X$ admits
$\{d_n\}_{n\inn}$ as a spreading model.

\item[ii.] For every Banach space $X$ and every $\{x_n\}_{n\inn}$
sequence in $X$ such that $\{x_n\}_{n\inn}$ generates
$\{d_n\}_{n\inn}$ as a spreading model, there exists a sequence
$\{y_n\}_{n\inn}$ in $[\{x_n\}_{n\inn}]$, such that
$\{y_n\}_{n\inn}$ isomorphically generates $\{e_n\}_{n\inn}$ as a
spreading model.

\end{enumerate}\label{lemmax1}
\end{lem}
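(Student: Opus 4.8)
The plan is to exploit the structure theory for singular spreading sequences that was recalled just before the statement. Since $\{e_n\}_{n\inn}$ is singular and spreading, it is weakly convergent to some $e$ in $E=[\{e_n\}_{n\inn}]$; set $d_n = e_n - e_{n+1}$ (a difference of consecutive terms) — or, depending on what is convenient, $d_n = \frac{1}{2}(e_{2n-1}-e_{2n})$. The sequence of such differences is again spreading (immediate from the spreading property of $\{e_n\}_{n\inn}$ applied to the relevant index sets), and it is weakly null because $e_n\rightharpoonup e$ forces $e_{2n-1}-e_{2n}\rightharpoonup 0$. So $\{d_n\}_{n\inn}$ is the candidate.

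For part (i): suppose $X$ admits $\{e_n\}_{n\inn}$ as a spreading model, witnessed by a bounded sequence $\{x_n\}_{n\inn}$ in $X$. Then the sequence $w_n = x_{2n-1}-x_{2n}$ (respectively $x_n - x_{n+1}$, after passing to a subsequence along which a spreading model exists) generates a spreading model whose norm, by the defining asymptotic estimates, is exactly $\|\sum a_i d_i\|$; one just has to track that the $\de_n$-estimates for $\{x_n\}$ pass to the differenced sequence, doubling the index gap as needed. Thus $X$ admits $\{d_n\}_{n\inn}$ as a spreading model. (Here one uses that a sequence generating a spreading model has a subsequence whose every subsequence generates the same spreading model, so the differenced sequence is unambiguous.)

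For part (ii): this is the reverse direction and, I expect, the main obstacle. Given $\{x_n\}_{n\inn}$ in $X$ generating $\{d_n\}_{n\inn}$, I need to recover $\{e_n\}_{n\inn}$ inside $[\{x_n\}_{n\inn}]$. The idea is to run a Cesàro/averaging argument: since $\{e_n\}$ is singular, $e$ lies in the closed span and $e_n = d_n + d_{n+1} + \dots + d_{N} + e_{N+1}$ with $e_{N+1}$ eventually close to $e$ in an appropriate (norm-bounded, weak) sense. More precisely, recalling that after subtracting $e$ the sequence $e_n' = e_n - e$ is $1$-unconditional, spreading and spans an isomorphic copy of $E$, one reconstructs $e$ as a limit of averages of the $e_n'$ — but since $e_n'\rightharpoonup 0$ that won't converge in norm, so instead one builds successive blocks of the $x_n$: choose increasing blocks $B_1 < B_2 < \dots$ and set $y_k$ to be a suitable combination of $\{x_n : n\in B_k\}$ mimicking how $e_k$ is built from the $d$'s. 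The key calculation is to show $\|\sum_{k=1}^\ell a_k y_k\|_X \to \|\sum_{k=1}^\ell a_k e_k\|_E$ up to a uniform constant, using the spreading-model estimates for $\{x_n\}$ together with the spreading property of $\{e_n\}$; the constant (rather than isometry) is why we only claim \emph{isomorphically generates}. The delicate point is controlling the tail/weak-limit contributions uniformly in $\ell$ and the coefficients — this is where boundedly-complete-type or reflexivity considerations (available here since all spaces in sight are super reflexive) make the tails vanish in the limit, so the reconstruction goes through.

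Finally I would assemble: $\{d_n\}_{n\inn}$ is spreading and weakly null by construction, (i) follows from the differencing lemma on spreading-model estimates, and (ii) follows from the block reconstruction of $\{e_n\}$ from $\{d_n\}$. The one genuinely subtle step is (ii), specifically the uniform estimate showing the reconstructed sequence is equivalent (not merely that it has the right norms on fixed-length combinations) to $\{e_n\}_{n\inn}$.
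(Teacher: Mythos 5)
Your choice of $d_n$ differs from the paper's: the paper takes $d_n=e_n'=e_n-e$, where $e$ is the weak limit of $\{e_n\}_{n\inn}$ (a $1$-unconditional, spreading, weakly null sequence by the structural facts recalled just before the lemma), whereas you take a difference sequence. For part i. your differencing argument is workable and even somewhat more elementary than the paper's route, which shows that a sequence $\{x_n\}_{n\inn}$ generating the singular model $\{e_n\}_{n\inn}$ has no basic subsequence, hence by Rosenthal's $\ell_1$ theorem is weakly convergent to some $x$, and that $x_n-x$ then generates $\{d_n\}_{n\inn}$. Part ii., however, is where your proposal breaks down, and not merely for lack of detail.

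The telescoping reconstruction cannot work as described. With $d_j=e_j-e_{j+1}$ one has $\sum_{j=n}^N d_j=e_n-e_{N+1}$, and $e_{N+1}$ converges to $e$ only weakly, never in norm (otherwise the model would be trivial); so no finite linear combination of the $d_j$'s approximates $e_n$ in norm, and the spreading-model estimates for $\{x_n\}_{n\inn}$, which only control finite combinations, cannot be transferred to produce the desired $y_k$'s. Your appeal to reflexivity is also unavailable: in part ii. the space $X$ is an arbitrary Banach space, not one of the super reflexive spaces of the construction. The paper's mechanism is entirely different and much softer: since $\{d_n\}_{n\inn}$ is weakly null and spreading, Rosenthal's criterion (Proposition 3.7 of the Argyros--Kanellopoulos--Tyros preprint) makes it Ces\`aro summable to zero, which forces any sequence $\{x_n\}_{n\inn}$ generating it to be weakly null; one then simply sets $y_n=x_n+x$ for an arbitrary fixed nonzero $x\in[\{x_n\}_{n\inn}]$. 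The point, supplied by Proposition 3.13 and Lemma 5.8 of that preprint, is that the equivalence class of a singular spreading sequence is determined by its unconditional part together with \emph{any} nonzero perturbing vector, so there is no need to recover $e$ itself. This structural fact is the missing idea in your proposal; without it (or an independent proof of it) part ii. does not go through, and you would additionally have to check it against your particular choice of $\{d_n\}_{n\inn}$, since property ii. quantifies over sequences generating that exact sequence.
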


\begin{proof}
Set $d_n = e_n^\prime$ as previously defined. Let $X$ be a Banach
space and $\{x_n\}_{n\inn}$ a sequence in $X$, which generates
$\{e_n\}_{n\inn}$ as a spreading model. Since $\{e_n\}_{n\inn}$ is
singular, $\{x_n\}_{n\inn}$ cannot contain a Schauder basic
subsequence. Thus it is neither equivalent to the basis of
$\ell_1$, nor nontrivial weak-Cauchy (see proof of Proposition 2.2
in \cite{r}). By Rosenthal's $\ell_1$ theorem (see \cite{q}), this
means that it is weakly convergent to some element $x\in X$. By
Lemma 5.8 of \cite{g}, if we set $x_n^\prime = x_n - x$, then
$\{x_n^\prime\}_{n\inn}$ generates $\{e_n\}_{n\inn}$ as a
spreading model. See also \cite{f}.

For ii. suppose that $\{x_n\}_{n\inn}$ is a sequence in $X$, that
generates $\{d_n\}_{n\inn}$ as a spreading model. By Rosenthal's
criterion for spreading sequences (Proposition 3.7, \cite{g}, see
also \cite{f}), $\{d_n\}_{n\inn}$ is Ces\`aro summable to zero.
Observe that this means that for every infinite subset $L$ of
$\mathbb{N}$, for any $\e>0$, one may find $F$ a finite subset of
$L$ and $\{\lambda_{i}\}_{i\in F}$ positive reals with $\sum_{i\in
F}\lambda_i = 1$, such that $\|\sum_{i\in F}\lambda_ix_i\| < \e$.
This means that $\{x_n\}_{n\inn}$ is weakly null. Otherwise there
would exist $\e>0, x^*\in S_{X^*}$ and $L$ an infinite subset of
$\mathbb{N}$, such that $x^*(x_n) > \e$, for all $n\in L$. This
contradicts our previous observation. Take $x$ a non zero element
in $[\{x_n\}_{n\inn}]$ and set $y_n = x_n + x$. By combining
Proposition 3.13 and Lemma 5.8 of \cite{g} (see also \cite{f}),
the result follows.
\end{proof}

\begin{lem}Let $X$ be a super reflexive Banach space with a
basis. Then every $k$-iterated spreading model of $X$ is
equivalent to a spreading sequence in the space generated by a
block $k$-iterated spreading model of $X$.\label{lemmax2}
\end{lem}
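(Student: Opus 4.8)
The plan is to prove the statement by induction on $k$, using at each step a diagonal/perturbation argument to replace an iterated spreading model living in an arbitrary intermediate space by one living in a block-iterated spreading model. For $k=1$ there is nothing to prove: a $1$-iterated spreading model of $X$ is generated by some seminormalized sequence $\{x_n\}_{n\inn}$ in $X$, and by passing to a subsequence we may assume $\{x_n\}_{n\inn}$ is either a block sequence of the basis (after a small perturbation, using that seminormalized sequences not norm-convergent to $0$ can be perturbed to block sequences) or norm-convergent, in which case the spreading model is trivial; so the spreading model is equivalent to one generated by a block sequence of the basis of $X$, which is exactly a block $1$-iterated spreading model.

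For the inductive step, suppose the claim holds for $k-1$ and let $\{e_n\}_{n\inn}$ be a $k$-iterated spreading model of $X$. By definition there is a Banach space $E$ with $X\stackrel{k-1}{\rightarrow}E$ and a seminormalized sequence $\{f_n\}_{n\inn}$ in $E$ generating $\{e_n\}_{n\inn}$. Unravelling one step, $\{f_n\}_{n\inn}$ itself lives in a space $E$ which is generated by a $(k-1)$-iterated spreading model $\{g_n\}_{n\inn}$ of $X$; so $E=[\{g_n\}_{n\inn}]$, and $\{g_n\}_{n\inn}$ is a spreading sequence. First I would apply the inductive hypothesis to $\{g_n\}_{n\inn}$: it is equivalent to a spreading sequence living in the space $\tilde E$ generated by some block $(k-1)$-iterated spreading model $\{\tilde g_n\}_{n\inn}$ of $X$. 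Via this equivalence (an isomorphism $E\to\tilde E$ sending $g_n\mapsto\tilde g_n$), the image of $\{f_n\}_{n\inn}$ is a seminormalized sequence in $\tilde E$ whose spreading model is equivalent to $\{e_n\}_{n\inn}$ — here one uses the standard fact, recorded before the relevant definition in the excerpt, that isomorphic spaces admit equivalent spreading models. Now $\tilde E$ has a Schauder basis (being generated by a block iterated spreading model), so the image sequence of $\{f_n\}_{n\inn}$ may, after passing to a subsequence and a small perturbation, be assumed to be a block sequence of the basis of $\tilde E$ — or it norm-converges and yields a trivial spreading model. In the block case, this exhibits $\{e_n\}_{n\inn}$ as equivalent to a spreading sequence in the space generated by a block spreading model of $\tilde E$, and since $X\substack{k-1\\\longrightarrow\\\text{bl}}\tilde E$, that spreading model is a block $k$-iterated spreading model of $X$, as required. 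In the trivial case the argument is even easier since a trivial spreading model is generated by a suitable constant sequence in $X$ itself.

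The main obstacle is the perturbation step: justifying that a seminormalized sequence in a space with a basis has a subsequence equivalent, up to a block sequence, to its own spreading model. The point is that a seminormalized sequence either has a norm-null "small-coordinate" tail (in which case a gliding-hump / sliding-hump argument along the basis produces a block sequence arbitrarily close to a subsequence, and small perturbations of a sequence generate the same spreading model up to equivalence), or it fails to go to zero coordinatewise and then has a subsequence norm-convergent to a nonzero vector — this is exactly the trivial-spreading-model dichotomy recalled in the Preliminaries. One must be a little careful that the perturbation is summable so that the perturbed block sequence is genuinely equivalent to the original one and hence generates an equivalent spreading model; but this is routine once the gliding hump is set up. The super-reflexivity hypothesis is not really needed for this lemma beyond ensuring the spreading models in play are well-behaved, though it guarantees $X$ is reflexive so weak-null subsequences are available, which streamlines the gliding-hump construction.
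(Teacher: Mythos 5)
Your induction scheme is the same as the paper's (base case $k=1$, then unravel one step and transfer the generating sequence through the isomorphism supplied by the inductive hypothesis), but the dichotomy you rely on at the bottom of both the base case and the inductive step is false, and this is a genuine gap. You claim that a seminormalized sequence in a space with a basis either has a subsequence that is a small perturbation of a block sequence, or has a norm-convergent subsequence (hence a trivial spreading model). The missing third case is a sequence that converges weakly to a nonzero limit without converging in norm: for instance $x_n = e_1 + e_{n+1}$ in $\ell_2$. Such a sequence is seminormalized, has no norm-convergent subsequence (so its spreading model is non-trivial), and no subsequence of it is a perturbation of a block sequence, since every subsequence still converges weakly to $e_1\neq 0$. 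Its spreading model is exactly the \emph{singular} case recalled in the paper: non-trivial but not Schauder basic. The trivial-spreading-model dichotomy you invoke only says that trivial spreading models correspond to norm-convergent subsequences; it does not say that failure of coordinatewise decay forces norm convergence.

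This singular case is precisely where the paper has to work, and where super-reflexivity is genuinely used (reflexivity gives a weakly convergent subsequence, so the only obstruction to blocking is a nonzero weak limit). The paper's fix, via Lemma \ref{lemmax1}, is to subtract the weak limit $e$ from $\{e_n\}_{n\inn}$ to obtain a weakly null spreading sequence $\{d_n\}_{n\inn}$ which \emph{is} generated (after gliding hump) by a block sequence, and then to observe that $d_n' = d_1 + d_{n+1}$ is a spreading sequence inside $[\{d_n\}_{n\inn}]$ equivalent to $\{e_n\}_{n\inn}$ --- which is exactly why the lemma is phrased as ``equivalent to a spreading sequence \emph{in the space generated by} a block $k$-iterated spreading model'' rather than ``equivalent to a block $k$-iterated spreading model.'' Your proof as written never takes this detour, so it does not establish the statement in the singular case. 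The remaining ingredients (transferring the sequence through the isomorphism in the inductive step, and the gliding-hump perturbation in the weakly null case) are fine and match the paper.
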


\begin{proof} We prove this Lemma by induction on $k$. Let $\{e_n\}_{n\inn}$ be a spreading model of $X$.
As we previously mentioned, it must be either 1-unconditional and
weakly null, or singular.

Suppose it is weakly null. If $\{x_n\}_{n\inn}$ is a sequence in
$X$ which generates $\{e_n\}_{n\inn}$ as a spreading model,
arguing as in the proof of Lemma \ref{lemmax1}, $\{x_n\}_{n\inn}$
is weakly null, thus has a subsequence equivalent to a block
sequence. By the way the block sequence is chosen, it is easy to
see that the block sequence actually isometrically generates
$\{e_n\}_{n\inn}$ as a spreading model.

If it is singular, then by i. of Lemma \ref{lemmax1}, $X$ admits
$\{d_n\}_{n\inn}$ as a spreading model, which is 1-unconditional
and weakly null. Apply the previous case. Then there is a block
sequence in $X$ that generates $\{d_n\}_{n\inn}$ as a spreading
model. Define $d_n^\prime = d_1 + d_{n+1}$. Then
$\{d_n^\prime\}_{n\inn}$ is a spreading sequence in
$[\{d_n\}_{n\inn}]$ and by using Proposition 3.13 of \cite{g} (see
also \cite{f}), one may prove that it is equivalent to
$\{e_n\}_{n\inn}$.

Observe that in either case, the space generated by the block
sequence in $X$, has a basis. This proves the statement for $k=1$.

Suppose that it is true for $k\inn$ and let $\{e_n\}_{n\inn}$ be a
$k+1$-iterated spreading model of $X$. Thus there exists a super
reflexive Banach space $E_k$, such that
$X\stackrel{k}{\rightarrow} E_k$, and $\{e_n\}_{n\inn}$ is a
spreading model of $E_k$. By the inductive assumption there exists
a supper reflexive Banach space $E_k^\prime$ with a basis, such
that
$X\substack{k\\
\longrightarrow\\ \text{bl}} E_k^\prime$ and $E_k\hookrightarrow
E_k^\prime$.

This means that $\{e_n\}_{n\inn}$ is equivalent to a spreading
model admitted by $E_k^\prime$. By applying the case $k=1$ for
$E_k^\prime$, the result follows.

\end{proof}

\subsection{A Sequence of Uniformly Convex Banach Spaces with a
1-symmetric Basis}

Given a uniformly convex Banach space $E$ with a 1-unconditional
and spreading basis $\{e_n\}_{n\inn}$, we shall inductively
construct a sequence of Banach spaces $\{X_k\}_{k\inn}$,
satisfying the following properties:

\begin{enumerate}

\item[i.] $X_k$ is uniformly convex and has a 1-symmetric basis
for all $k\inn$.

\item[ii.] $X_k$ is $\ell_{r_k}$ saturated, where
$\{r_k\}_{k\inn}$ is a strictly increasing sequence of positive
reals.

\item[iii.]$X_k$ and $E$ are totally incomparable for all $k\inn$.

\item[iv.]Any spreading model admitted by $X_{k+1}$, is either
equivalent to a spreading sequence in $X_k$, or generates a space
isomorphic to a subspace of $X_{k+1}$, for all $k\inn$.

\item[v.]The basis of $E$ is equivalent to a spreading model of
$X_1$ and the basis of $X_k$ is equivalent to a spreading model of
$X_{k+1}$, for all $k\inn$.

\end{enumerate}

Since $\{e_n\}_{n\inn}$ is weakly null and spreading, it is
1-unconditional. By virtue of Theorem \ref{08}, we may renorm $E$
such that the norm on $E$ is $p_0$-convex and $q_0$-concave, for
some $1<p_0\leqslant q_0 < \infty$. Choose $r_1>q_0$ and define
the Schreier-Baernstein space $SB\SB{E}{r_1}$. By Proposition
\ref{10}, $SB\SB{E}{r_1}$ is uniformly convex. Apply Theorem
\ref{00} and define $X_1$ to be the uniformly convex Banach space
with a 1-symmetric basis, which contains $SB\SB{E}{r_1}$.

Since $r_1>q_0$, by the fact that the norm on $E$ satisfies a
lower $\ell_{q_0}$ estimate, it cannot contain $\ell_r$, for
$r>q_0$. Also, as we have noted after Proposition \ref{13},
$SB\SB{E}{r_1}$ is $\ell_{r_1}$ saturated and by Theorem \ref{00},
the same is true for $X_1$. Thus the spaces $E$ and $X_1$ are
totally incomparable. By Corollary \ref{12}, since $X_1$ contains
$SB\SB{E}{r_1}$, $X_1$ isomorphically admits $\{e_n\}_{n\inn}$ as
a spreading model.

Assume that $\{X_i\}_{i=1}^k$ have been constructed. Using Theorem
\ref{08} once more, we may renorm $X_k$ such that the norm on
$X_k$ is $p_k$-convex and $q_k$-concave, for some $1<p_k\leqslant
q_k<\infty$. Choose $r_{k+1} > \max\{r_k,q_k\}$ and define the
Schreier-Baernstein space $SB\SB{X_k}{r_{k+1}}$, which is again
uniformly convex. Choose $1<s_{k+1}<t_{k+1}<p_k$ and construct the
uniformly convex space $X_{k+1}$ with a 1-symmetric basis, by
using the $\|\cdot\|_{s_{k+1},t_{k+1}}^m$ norms.

Again, $X_{k+1}$ is $\ell_{r_{k+1}}$ saturated, thus it is totally
incomparable with $E$ and $X_i$, for $i<k+1$. Also $X_{k+1}$
isomorphically admits the basis of $X_k$ as a spreading model. The
inductive construction is complete. Properties i., ii., iii. and
v. are obviously satisfied. We shall therefore only prove property
iv.

Let $k\geqslant 1$ and let $\{x_n\}_{n\inn}$ be a normalized block
sequence in $X_{k+1}$ that generates a spreading model. By passing
to a subsequence, one of the following is satisfied. Either
$\|x_n\|_\infty\rightarrow 0$, or there is an $\e>0$ such that
$\|x_n\|_\infty>\e$, for all $n\inn$.

If the first one is satisfied, then by Proposition \ref{07},
$\{x_n\}_{n\inn}$ has a subsequence isomorphic to a block sequence
in $SB\SB{X_k}{r_{k+1}}$. By Corollaries \ref{13} and \ref{15},
the spreading model generated by $\{x_n\}_{n\inn}$, is either
equivalent to a spreading sequence in $X_k$, or in
$\ell_{r_{k+1}}$. Since $X_{k+1}$ is $\ell_{r_{k+1}}$ saturated,
the spreading model is either equivalent to a spreading sequence
in $X_k$, or it is equivalent to a basic sequence in $X_{k+1}$.

If the second one is satisfied, then by Proposition \ref{16} pass
to a subsequence and take the decomposition $x_n = y_n + z_n$.
Then $y_n$ generates a disjointly supported symmetric spreading
model $\{u_n\}_{n\inn}$ in $X_{k+1}$, such that $\|u_n\|_\infty =
\|u_1\|_\infty>0$, for all $n\inn$.

Since the map $j: X_{k+1}\rightarrow\ell_{t+1}$ (the same map as
in the proof of Lemma \ref{x1}), is continuous

\begin{equation*}
\|\sum_{i=1}a_i\tilde{e}_i\|_{X_{k+1}} \geqslant
\|j\|^{-1}(\sum_{i=1}^\infty |a_i|^{t_{k+1}})^\frac{1}{t_{k+1}}
\end{equation*}

where $\{\tilde{e}_n\}_{n\inn}$ denotes the basis of $X_{k+1}$.

By this, $\|\sum_{i=1}^\infty a_iu_i\|_{X_{k+1}} \geqslant
C(\sum_{i=1}^\infty |a_i|^{t_{k+1}})^\frac{1}{t_{k+1}}$ for some
positive constant $C$.

Using Proposition \ref{07} again, by passing to a subsequence,
$\{z_n\}_{n\inn}$ is equivalent to a block sequence in
$SB\SB{X_k}{r_{k+1}}$. Thus the spreading model generated by
$\{z_n\}_{n\inn}$ is equivalent to the basis of $\ell_{r_{k+1}}$,
or equivalent to a disjointly supported sequence $\{v_n\}_{n\inn}$
in $X_k$. Remember that the norm on $X_k$ satisfies an upper
$\ell_{p_k}$ estimate and $t_{k+1}<p_k<r_{k+1}$.

Thus in either case, arguing as in the proof of Corollary
\ref{15}, the spreading model generated by $\{y_n\}_{n\inn}$,
absorbs the one generated by $\{z_n\}_{n\inn}$, thus
$\{x_n\}_{n\inn}$ isomorphically generates $\{u_n\}_{n\inn}$ as a
spreading model, which is contained in $X_{k+1}$. Therefore
property iv. holds for spreading models generated by block
sequences in $X_{k+1}$.

By virtue of Lemma \ref{lemmax2}, the same is true for any
spreading model admitted by $X_{k+1}$.

\begin{lem} The sequence $\{X_k\}_{k\inn}$ satisfies the following
additional property. For every $k\inn$ and for every $i<k$, if
$\{\tilde{e}_n\}_{n\inn}$ is an $i$-iterated spreading model of
$X_k$, then there exists $k-i\leqslant m\leqslant k$, such that
$[\{\tilde{e}_n\}_{n\inn}]$ is isomorphic to a subspace of $X_m$.
\label{18}
\end{lem}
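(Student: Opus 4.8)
The statement is an immediate refinement of property iv., which says that any spreading model admitted by $X_{k+1}$ is either equivalent to a spreading sequence in $X_k$ or generates a space isomorphic to a subspace of $X_{k+1}$. The plan is to iterate this property along the chain of spaces $X_k \to X_{k-1} \to \cdots$, using induction on $i$, and to keep track of how the index can drop by at most one at each step.

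First I would set up the induction on $i$. For $i=1$, let $\{\tilde{e}_n\}_{n\inn}$ be a $1$-iterated spreading model of $X_k$; by property iv. (applied with $k$ in place of $k+1$, i.e. to the pair $X_k, X_{k-1}$, which is legitimate provided $k\geqslant 2$), $[\{\tilde{e}_n\}_{n\inn}]$ is isomorphic to a subspace of $X_k$ or of $X_{k-1}$, so $m\in\{k-1,k\}$, and since $1<k$ forces $k\geqslant 2$, both spaces exist; note $k-i = k-1$, so the range $k-i\leqslant m\leqslant k$ is exactly $\{k-1,k\}$, as required. For the inductive step, suppose the statement holds for $i$ and let $\{\tilde{e}_n\}_{n\inn}$ be an $(i+1)$-iterated spreading model of $X_k$ with $i+1<k$. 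By definition there is a super reflexive Banach space $E$ with $X_k \stackrel{i}{\rightarrow} E$ and $\{\tilde{e}_n\}_{n\inn}$ a spreading model of $E$; equivalently, $\{\tilde{e}_n\}_{n\inn}$ is a spreading model of some space $E'$ generated by an $i$-iterated spreading model of $X_k$. By the inductive hypothesis, $[E']$ — more precisely the space generated by that $i$-iterated spreading model — embeds isomorphically into $X_m$ for some $k-i\leqslant m\leqslant k$.

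The key point is then: a spreading model of a subspace of $X_m$ is equivalent to a spreading model admitted by $X_m$ itself (this is the observation recorded in the Preliminaries that isomorphic spaces admit equivalent spreading models, together with the trivial fact that a sequence living in a subspace still lives in the whole space). Hence $\{\tilde{e}_n\}_{n\inn}$ is equivalent to a spreading model admitted by $X_m$. Now apply property iv. to $X_m$ (valid since $m\geqslant k-i \geqslant k-i$ and $i+1<k$ gives $m\geqslant k-i\geqslant 2$, so $X_{m-1}$ exists): $[\{\tilde{e}_n\}_{n\inn}]$ is isomorphic to a subspace of $X_m$ or of $X_{m-1}$, i.e. to a subspace of $X_{m'}$ with $m'\in\{m-1,m\}$. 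Combining with $k-i\leqslant m\leqslant k$ we get $k-(i+1) = k-i-1\leqslant m-1\leqslant m'\leqslant m\leqslant k$, which is the desired conclusion for $i+1$.

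The main obstacle — such as it is — is bookkeeping: one must verify at each stage that the index $m$ never drops below $1$ so that the invoked instance of property iv. makes sense, and that the lower bound $k-i\leqslant m$ is genuinely propagated (the index drops by at most one per spreading-model step, and there are $i$ such steps plus possibly the final one, giving the bound $k-i$). The constraint $i<k$ is exactly what guarantees $m-1\geqslant k-i-1\geqslant 0$, so in fact $m-1\geqslant 1$ whenever we need $X_{m-1}$; when $m$ already equals $1$ the relevant spreading model is simply equivalent to a spreading sequence in $X_1$ and there is nothing further to iterate. Apart from this indexing care, the proof is a routine unwinding of the definitions together with repeated application of property iv.
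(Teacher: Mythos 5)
Your proof is correct. The underlying content is identical to the paper's --- everything rests on property iv.\ and the observation that each spreading-model step can lower the space index by at most one --- but the induction is organized differently: the paper inducts on the space index $k$ and, for a fixed chain of $i$ iterated spreading models of $X_{k+1}$, locates the first index $j_0$ at which the chain escapes $X_{k+1}$, invokes property iv.\ to land in $X_k$, and hands the remaining $i-j_0$ iterations to the inductive hypothesis; you instead induct on the iteration count $i$, applying the inductive hypothesis to the first $i$ steps and property iv.\ once more for the last step. Your version avoids the $j_0$-selection device entirely and makes the base case and the index bookkeeping (in particular that $m\geqslant 2$ whenever $X_{m-1}$ is invoked, which follows from $i+1<k$) fully explicit, which the paper leaves partly implicit; the paper's version processes all $i$ iterations of a fixed $X_{k+1}$ in one stroke. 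Both arguments deliver exactly the bound $k-i\leqslant m\leqslant k$, and neither requires anything beyond property iv.\ and the fact that a spreading model of a subspace of $X_m$ is equivalent to one admitted by $X_m$.
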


\begin{proof}

Assume that the statement is true for $k\geqslant 2$.

Assume now, that $\{x_n\}_{n\inn}$ is an $i$-iterated spreading
model of $X_{k+1}$, for $i<k+1$. If $[\{x_n\}_{n\inn}]$ is
isomorphic to a subspace of $X_{k+1}$, then the statement is true
for $m= k+1$. If it is not, assume
$\big\{\{x_n^j\}_{n\inn}\big\}_{j=1}^i$, is the sequence of
spreading models leading to $\{x_n\}_{n\inn}$. Set $j_0 =
\min\big\{j: [\{x_n^j\}_{n\inn}]$ is not isomorphic to any
subspace of $X_{k+1}\big\}$. By property iv.,
$[\{x_n^{j_0}\}_{n\inn}]$ must be equivalent to a spreading
sequence in $X_k$. Thus $\{x_n\}_{n\inn}$ is equivalent to a
$(i-j_0)$-iterated spreading model of $X_k$. Since $i-j_0 < k$, by
the inductive assumption, there is $k+1-i\leqslant
k-i+j_0\leqslant m\leqslant k$, such that $[\{x_n\}_{n\inn}]$ is
isomorphic to a subspace of $X_m$.
\end{proof}

\begin{cor}The family
$\big\{\mathcal{SM}_i^\text{it}(X_k)\big\}_{i=1}^k$ is strictly
increasing, for all $k\inn$.\label{19}
\end{cor}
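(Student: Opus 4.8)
The plan is to combine the strict separation of spreading model classes at the bottom of the iteration (which comes from total incomparability, property iii., together with property v.) with the structural dichotomy of Lemma \ref{18}. First I would recall that, since the classes $\mathcal{SM}_i^\text{it}(X_k)$ are increasing in $i$ (each $(i-1)$-iterated spreading model is trivially also the start of an $i$-iterated chain, and $X_k$ is super reflexive so its iterated spreading models are weakly null or singular and never trivial), it suffices to exhibit, for each $i<k$, a spreading sequence belonging to $\mathcal{SM}_{i+1}^\text{it}(X_k)$ but not to $\mathcal{SM}_i^\text{it}(X_k)$. The natural candidate is obtained from property v.: iterating backwards, the basis of $X_{k-i}$ is a spreading model of $X_{k-i+1}$, which in turn admits $\ldots$, so the basis of $X_{k-i}$ is an $i$-iterated spreading model of $X_k$; by property v. again (one more step), $E$ — or more precisely the basis $\{e_n\}_{n\inn}$ — is an $(i+1)$-iterated spreading model of $X_k$ when $i+1\leqslant k$, and similarly the basis of $X_{k-i}$ realizes the membership in $\mathcal{SM}_{i+1-1}^\text{it}$. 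So I would fix, for $0\leqslant i<k$, the basis of $X_{k-i-1}$ (setting $X_0=E$) as the test sequence: it lies in $\mathcal{SM}_{i+1}^\text{it}(X_k)$ by $i+1$ applications of property v.

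The heart of the argument is then to show this test sequence does \emph{not} lie in $\mathcal{SM}_i^\text{it}(X_k)$. Suppose it did. By Lemma \ref{18} applied with this $i<k$, the space generated by any $i$-iterated spreading model of $X_k$ is isomorphic to a subspace of some $X_m$ with $k-i\leqslant m\leqslant k$. On the other hand, the space generated by our test sequence is $X_{k-i-1}$ itself (or $E$ when $i=k-1$). So we would get that $X_{k-i-1}$ embeds isomorphically into $X_m$ for some $m\in\{k-i,\ldots,k\}$. But $X_{k-i-1}$ is $\ell_{r_{k-i-1}}$ saturated and every such $X_m$ is $\ell_{r_m}$ saturated with $r_{k-i-1}<r_{k-i}\leqslant r_m$ (property ii., strict monotonicity of $\{r_k\}$); since a lower $\ell_{q_m}$ estimate on the norm defining $X_m$ with $r_{k-i-1}<r_m$ (and the analogous saturation facts noted after Proposition \ref{13}) forbids $X_m$ from containing $\ell_{r_{k-i-1}}$, and hence from containing any subspace of the $\ell_{r_{k-i-1}}$-saturated space $X_{k-i-1}$, this is a contradiction. (When $i=k-1$ the same contradiction is reached using property iii., that $E$ and $X_m$ are totally incomparable for every $m\geqslant 1$.) Hence the test sequence separates $\mathcal{SM}_{i+1}^\text{it}(X_k)$ from $\mathcal{SM}_i^\text{it}(X_k)$, and the family is strictly increasing.

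The main obstacle I expect is bookkeeping the indices correctly: Lemma \ref{18} is stated for $i<k$ and places the iterated spreading model into $X_m$ for $k-i\leqslant m\leqslant k$, so one must check that the test sequence's generated space has an index strictly below this window (namely $k-i-1<k-i$), and that property v. really yields membership at level exactly $i+1$ without collapsing to a lower level. One should also be slightly careful that "equivalent to a spreading sequence in $X_m$" in property iv. and "isomorphic to a subspace of $X_m$" in Lemma \ref{18} are used consistently, and that the non-triviality of all iterated spreading models of the super reflexive spaces $X_k$ (already noted in Section 5) legitimizes speaking of the \emph{generated space} throughout. Once the index arithmetic is pinned down, the total-incomparability input does all the remaining work and the proof is short.
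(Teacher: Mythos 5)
Your proposal is correct and follows essentially the same route as the paper: exhibit the basis of $X_{k-i-1}$ (or of $E$ when $i=k-1$) as an $(i+1)$-iterated spreading model via property v., and rule it out as an $i$-iterated one by combining Lemma \ref{18} with the pairwise total incomparability of $E, X_1,\ldots,X_k$. The only quibble is your parenthetical justification of non-embeddability via a lower $\ell_{q_m}$ estimate; the paper simply invokes the already-established pairwise total incomparability (coming from the distinct $\ell_{r_m}$-saturations), which is the cleaner way to finish.
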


\begin{proof}
By Lemma \ref{18}, for $i<k$, any $i$-iterated spreading model
generates a Banach space isomorphic subspace of $X_m$, for
$k-i\leqslant m\leqslant k$.

Notice that $X_k$ isomorphically admits the basis of $X_{k-i}$ as
an $i$-iterated spreading model. If $i = k-1$, then $X_k$
isomorphically admits $\{e_n\}_{n\inn}$ as $i+1$-iterated
spreading model. If $i<k-1$, then $X_k$ isomorphically admits the
basis of $X_{k-i-1}$ as an  $i+1$-iterated spreading model.

In either case, since the spaces $\{X_k\}_{k\inn}, E$ are pairwise
totally incomparable, the result follows.
\end{proof}

\begin{proof}[Proof of Theorem \ref{maintheorem}]

If $\{e_n\}_{n\inn}$ is 1-unconditional, then the sequence
$\{X_k\}_{k\inn}$ is the desired one. This is an immediate
consequence of the properties i. to v. and Lemma \ref{18}

For the case in which $\{e_n\}_{n\inn}$ is not unconditional, it
must be singular. Apply Lemma \ref{lemmax1} and by keeping in
mind, that by the way the $d_n$ are chosen, $d_n\in E$ for all
$n\inn$, apply the previous case for $\{d_n\}_{n\inn}$.

By Lemma \ref{lemmax1}, $X_1$ isomorphically admits
$\{e_n\}_{n\inn}$ as a spreading model. Thus $X_k$ isomorphically
admits $\{e_n\}_{n\inn}$ as a $k$-iterated spreading model. Also
$E$ is isomorphic to $[\{d_n\}_{n\inn}]$ and the space generated
by an $i$-iterated spreading model of $X_k$, for $i<k$ is
isomorphic to a subspace of $X_m$, for $k-i\leqslant m\leqslant
k$. Since the spaces $X_m, E$ are totally incomparable, the result
follows.
\end{proof}

The methods used here make heavy use of the nice properties of
uniformly convex spaces. Therefore, although our result applies to
$\ell_p$ spaces, $1<p<\infty$, it remains unknown whether a
similar result can be stated for $c_0$ and $\ell_1$.


\begin{thebibliography}{99}

\bibitem{p}F. Albiac, N.J. Kalton,
\textit{Topics in Banach Space Theory}, Springer (2006).

\bibitem{n}G. Androulakis, E. Odell, Th. Schlumprecht, N.
Tomczak-Jaegermann, \textit{On the structure of the spreading
models of a Banach space}, Canad. J. Math. \textbf{57}, no.4,
673-707 (2005).

\bibitem{g}S.A. Argyros, V. Kanellopoulos, K. Tyros,
\textit{Higher Order Spreading Models in Banach Space Theory}
(Preprint).

\bibitem{k}S.A. Argyros, V. Kanellopoulos, K. Tyros,
\textit{Finite Order Spreading Models} (Preprint).

\bibitem{h} A. Baernstein,
\textit{On reflexivity and summability}, Studia Math. \textbf{42},
91-94 (1972).

\bibitem{f}B. Beauzamy, J.-T. Laprest\'e,
\textit{Model\`eles \'etal\'es des espaces de Banach}, Travaux en
Cours, Hermann (1984).

\bibitem{m}B. Beauzamy, B. Maurey,
\textit{Iteration of spreading models}, Ark. Mat. \textbf{17},
no.2, 193-198 (1979).

\bibitem{o}A. Brunel, L. Sucheston,
\textit{On B-convex Banach spaces}, Math. Systems Theory
\textbf{7}, no.4, 294-299 (1974).

\bibitem{a}W.J. Davis,
\textit{Embedding spaces with unconditional bases}, Israel J.
Math. \textbf{20}, 189-191 (1975).

\bibitem{b}W.J. Davis, T. Figiel, W.B. Johnson, A. Pelczynski,
\textit{Factoring weakly compact operators}, J. Funct. Anal.
\textbf{17}, 311-327 (1974).

\bibitem{j}E. Dubinsky, A. Pelczynski, H.P. Rosenthal,
\textit{On Banach spaces $X$, for which
$\Pi_2(\mathscr{L}_\infty,X) = B(\mathscr{L}_\infty,X)$}, Studia
Math. \textbf{44}, 617-648 (1972).

\bibitem{t}P. Enflo,
\textit{Banach spaces which can be given an equivalent uniformly
convex norm}, Israel J. Math. \textbf{13}, 281-288 (1972).

\bibitem{e}T. Figiel, W.B. Johnson,
\textit{A uniformly convex Banach space which contains no $l_p$},
Compositio Math. \textbf{29}, 179-190 (1974).

\bibitem{c}R.C. James,
\textit{Bases and refelexivity of Banach spaces}, Ann. of Math.
(2) \textbf{52}, 518-527 (1950).

\bibitem{l}J. Lindenstrauss,
\textit{A remark on symmetric bases}, Israel J. Math, \textbf{13},
317-320 (1972).

\bibitem{d}J. Lindenstrauss, L. Tzafriri,
\textit{Classical Banach Spaces I and II}, Springer (1977).

\bibitem{r}H.P. Rosenthal,
\textit{A characterization of Banach spaces containing $c_0$}, J.
Amer. Math. Sec. \textbf{7}, no.3, 707-748 (1994).

\bibitem{q}H.P. Rosenthal,
\textit{A characterization of Banach spaces containing $l_1$},
Proc. Nat. Acad. Sci. U.S.A. \textbf{71}, 2411-2413 (1974).

\bibitem{i}J. Schreier,
\textit{Ein Gegenbeispiel zur Theorie der schwachen Konvergenz},
Studia Math. \textbf{2}, 58-62 (1930).

\bibitem{s}A. Szankowski,
\textit{Embedding Banach spaces with unconditional bases into
spaces with symmetric bases}, Israel J. Math, \textbf{15}, 53-59
(1973).

\end{thebibliography}
\end{document}